\tikzstyle{path}=[draw, line width=1.3, color=darkgray]
\tikzstyle{pathlight}=[draw, line width=1, dotted, color=lightgray]
\numberwithin{equation}{section}
\newtheorem{theorem}[equation]{Theorem}
\newtheorem{proposition}[equation]{Proposition}
\newcommand{\G}{\mathcal{G}}
\newcommand{\W}{\mathcal{WOC}}
\newcommand{\V}{\mathcal{V}}
\DeclareMathOperator{\op}{op}
\newcommand{\abs}[1]{\lvert#1\rvert}
\def\col#1{\textcolor{black!25!red}{#1}}
\title{Restricted generating trees for \\weak orderings}
\author{Daniel Birmajer\affiliationmark{1}
  \and Juan B. Gil\affiliationmark{2}
  \and David Kenepp\affiliationmark{3}
  \and Michael D. Weiner\affiliationmark{2}}
\affiliation{
  Nazareth College, Rochester, NY, U.S.A.\\
  Penn State Altoona, Altoona, PA, U.S.A.\\
  Department of Mathematics, UC Davis, Davis, CA, U.S.A.}
\keywords{weak orderings, generating trees, pattern avoidance}
\begin{document}
\publicationdetails{24}{2022}{1}{11}{8350}
\maketitle
\begin{abstract}
Motivated by the study of pattern avoidance in the context of permutations and ordered partitions, we consider the enumeration of weak-ordering chains obtained as leaves of certain restricted rooted trees. A tree of order $n$ is generated by inserting a new variable into each node at every step. A node becomes a leaf either after $n$ steps or when a certain {\em stopping condition} is met. In this paper we focus on conditions of size 2 ($x=y$, $x<y$, or $x\le y$) and several conditions of size 3. Some of the cases considered here lead to the study of descent statistics of certain `almost' pattern-avoiding permutations. 
\end{abstract}

\section{Introduction}
A \emph{weak-ordering chain} in the variables $x_1, x_2, \cdots, x_n$ is an expression of the form
\[ x_{i_1} \op x_{i_2} \op \dotsb \op x_{i_n}, \] 
where $\op$ is either $<$ or $=$. We let $\W(n)$ denote the set of all weak-ordering chains in $n$ variables. Every $w\in\W(n)$ corresponds to an ordered partition of $[n]=\{1,\dots,n\}$ obtained from the indices of the variables in $w$, where the numbers $i$ and $j$ are in the same block of the partition whenever $x_i=x_j$. For example,
\[ x_2<x_4=x_5<x_1<x_3 \;\;\longleftrightarrow\;\; \{\{2\},\{4,5\},\{1\},\{3\}\}. \] 
Therefore, if $\W(0)$ consists of the empty chain, weak-ordering chains are enumerated by the sequence $(f_n)_{n\ge 0}$ of Fubini numbers (ordered Bell numbers) $1$, $1$, $3$, $13$, $75$, $541$, $4683$, $47293,\dots$, \cite[A000670]{oeis}, which satisfy the recurrence
\[ f_0 = 1 \;\text{ and }\;  f_n = \sum_{i=1}^n \binom{n}{i} f_{n-i} \;\text{ for } n \ge 1. \]

\clearpage
Every element $w\in \W(n)$ can be recursively generated starting with $x_1$, and then inserting $x_i$ (together with either $<$ or $=$) into a previously constructed weak-ordering chain of length $i-1$. This process generates a rooted labeled tree whose nodes at level $i$ are labeled by the elements of $\W(i)$. For example, for $n=3$, we get the tree
\begin{center}
\scalebox{0.85}{
\begin{forest}
for tree={s sep=15pt, inner sep=2pt, l=0}
[1
    [21, for tree={s sep=5pt} 
    	[321] [\underline{23}1] [231] [2\underline{13}] [213]
    ]
    [\underline{12}, for tree={s sep=5pt} 
	[3\underline{12}][\underline{123}][\underline{12}3]
    ]
    [12, for tree={s sep=5pt}
    	[312] [\underline{13}2] [132] [1\underline{23}] [123]
    ]
]
\end{forest}
}
\end{center}
where $ij$ is a shortcut for $x_i<x_j$ and $\underline{ij}$ represents $x_i=x_j$.

Now, suppose that we wish to stop the above generating process as soon as we have a tie. In other words, suppose that we do not allow nodes with $x_i=x_j$ for some $i>j$ to have descendants. Then, the above tree would take the form
\begin{equation} \label{eq:n=3Ex}
\scalebox{0.85}{
\begin{forest}
for tree={s sep=15pt, inner sep=2pt, l=0}
[1
    [21, for tree={s sep=5pt} 
    	[321] [\col{\underline{23}1}] [231] [\col{2\underline{13}}] [213]
    ]    
    [\col{\underline{12}} [,phantom] ]
    [12, for tree={s sep=5pt}
    	[312] [\col{\underline{13}2}] [132] [\col{1\underline{23}}] [123]
    ]    
]
\end{forest}
}
\end{equation}
with only 11 leaves instead of 13. We call \eqref{eq:n=3Ex} a restricted generating tree of weak-ordering chains subject to the \emph{stopping condition} $x_i=x_j$.

As another example, consider the stopping condition $x_{i}<x_{j}<x_{k}$ with $i < j < k$. In this case, the 
generating tree at level 3 looks like the tree for $\W(3)$:
\begin{center}
\scalebox{0.85}{
\begin{forest}
for tree={s sep=15pt, inner sep=2pt, l=0}
[1
    [21, for tree={s sep=5pt} 
    	[321] [\underline{23}1] [231] [2\underline{13}] [213]
    ]
    [\underline{12}, for tree={s sep=5pt} 
	[3\underline{12}][\underline{123}][\underline{12}3]
    ]
    [12, for tree={s sep=5pt}
    	[312] [\underline{13}2] [132] [1\underline{23}] [\col{123}]
    ]
]
\end{forest}
}
\vskip6pt
\end{center}
but the node with label \col{123} will have no descendants as the generating tree grows. 

\smallskip
The goal of this paper is to study the enumeration of weak-ordering chains subject to various stopping conditions. This is equivalent to counting the number of leaves of the corresponding restricted generating subtree of $\W(n)$.

Our strategy relies on separating the leaves that avoid the stopping condition, call them \emph{active leaves}, from the leaves that contain the stopping condition, call them \emph{inactive leaves}. Throughout this paper, we will consistently use $a_n$ to denote the total number of active leaves after $n$ steps, and $b_n$ for the total number of inactive leaves. We also let $\Delta_n$ be the number of leaves that become inactive at level $n$, thus $\Delta_1=0$ and $\Delta_n=b_n-b_{n-1}$ for $n\ge 2$. Note that the meaning of active/inactive depends on the given stopping condition.

For example, for the stopping condition $x_i=x_j$, we have 
\begin{equation*}
a_1=1, \;\; b_1=0, \qquad
a_2=2, \;\; b_2=1, \qquad
a_3=6, \;\; b_3=5.
\end{equation*}
The five inactive leaves at step 3, see \eqref{eq:n=3Ex}, are the ones labelled in red.

This manuscript is organized as follows. In Section~\ref{sec:size2conditions}, we start with the simpler case when the stopping condition involves only one operation ($=$, $<$, or $\le$). In Sections~\ref{sec:123condition}--\ref{sec:213condition}, we consider stopping conditions of the form $x_{i_1} \op x_{i_2} \op x_{i_3}$ where $\op$ is either $<$ or $\le$. Our approach leads to descent statistics of certain `almost' pattern-avoiding permutations. Finally, in Section~\ref{sec:mixed}, we consider the $k$-equal case and other stopping conditions with restrictions in both the order and the sizes of the parts in the partitions corresponding to the weak-ordering chains.

\section{Trees with stopping condition of size 2}
\label{sec:size2conditions}

In this section, we will discuss the enumeration of weak-ordering chains subject to the stopping conditions $x_i=x_j$, $x_i<x_j$, and $x_i\le x_j$, respectively.

\begin{theorem}[Stopping condition $\boldsymbol{x_i = x_j}$]
If $w_n$ is the number of weak-ordering chains in $\W(n)$, subject to the stopping condition $x_i = x_j$ with $i\not= j$, then $w_n = 2n!-1$. This sequence starts with $1, 3, 11, 47, 239, 1439, 10079, 80639,\dots,$ cf.\ \cite[A020543]{oeis}.
\end{theorem}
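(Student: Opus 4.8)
The plan is to partition the leaves of the restricted tree at level $n$ into the \emph{active} leaves (those avoiding $x_i=x_j$) and the inactive ones, so that $w_n=a_n+b_n$, and then evaluate each piece separately. First I would identify the active leaves: a chain containing no equality is a strict total ordering $x_{i_1}<\dots<x_{i_n}$, which carries the same data as a permutation of $[n]$. Moreover, every prefix of such a chain is itself strict, so the stopping condition is never triggered during its construction, and each strict ordering really does appear (exactly once) as a genuine active leaf at level $n$. Hence $a_n=n!$.

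Next I would count the inactive leaves by tracking the level at which they are created, i.e.\ by computing $\Delta_k$ for each $k$. An inactive leaf is born at step $k$ precisely when we take an active node at level $k-1$ and insert $x_k$ using an equality. An active node at level $k-1$ is a strict chain with $k-1$ singleton parts; inserting $x_k$ into it produces $2(k-1)+1=2k-1$ children, of which $k$ use a strict inequality (one for each gap, keeping the child active) and the remaining $k-1$ use an equality $x_k=x_{i_j}$ (one for each existing part, which makes the child inactive since $k$ exceeds every previous index). As there are $a_{k-1}=(k-1)!$ active nodes at level $k-1$, this gives
\[
\Delta_k=(k-1)\,(k-1)!\qquad(k\ge 2),
\]
which matches $\Delta_2=1$ and $\Delta_3=4$ observed in the worked example.

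Finally, because an inactive leaf remains a leaf forever, the inactive leaves simply accumulate: $b_n=\sum_{k=2}^{n}\Delta_k=\sum_{j=1}^{n-1} j\cdot j!$. The key (and essentially only) computation is to telescope this sum via $j\cdot j!=(j+1)!-j!$, which collapses to $b_n=n!-1$. Adding the two parts yields $w_n=a_n+b_n=n!+(n!-1)=2n!-1$, as claimed, and the initial values $1,3,11,\dots$ confirm the formula. I expect no serious obstacle here: the only subtlety is correctly separating the strict-versus-equality insertion positions at each step, and the only trick is recognizing the telescoping identity that evaluates $\sum j\cdot j!$, after which the argument is routine.
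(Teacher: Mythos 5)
Your proof is correct and follows essentially the same route as the paper's: identify the active leaves with permutations ($a_n=n!$), observe that each active node at level $k-1$ spawns exactly $k-1$ inactive leaves via equality insertions so that $\Delta_k=(k-1)(k-1)!$, and telescope the sum to get $b_n=n!-1$. Your accounting of the $2k-1$ children per active node and the consistency check against the worked example are extra detail the paper omits, but the argument is the same.
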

\begin{proof}
First, since an active leaf is a weak-ordering chain that avoids a tie, the number of active leaves in $\W(n)$ is just the number of permutations on $\{x_1, \dotsc, x_n\}$, so $a_n = n!$.

On the other hand, every leaf that becomes inactive at level $j$ is a descendant of an active node at level $j-1$. In fact, everyone of these active weak-ordering chains generates $j-1$ inactive leafs, obtained by replacing $x_k$ with $x_j=x_k$ for $k\in\{1,\dots,j-1\}$. Therefore,
\[ \Delta_j = (j-1)a_{j-1} = (j-1)(j-1)!, \] 
which implies that the total number of inactive leaves is given by
\begin{equation*}
  b_{n} = \sum_{j=1}^n \Delta_j = \sum_{j=1}^n (j-1)(j-1)! =  n! -1.
\end{equation*}
In conclusion, $w_{n} = a_{n} + b_{n} = 2n! - 1$.
\end{proof}

\begin{theorem}[Stopping condition $\boldsymbol{x_i<x_j}$]
  If $w_n$ is the number of weak-ordering chains in $\W(n)$, subject to the
  stopping condition $x_i < x_j$ with $i<j$, then $w_n = (n-1)2^{n-1}+1$. That
  is, $1$, $3$, $9$, $25$, $65$, $161$, $385$, $897,\dots,$ cf.\ \cite[A002064]{oeis}.
\end{theorem}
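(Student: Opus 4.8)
The plan is to mirror the proof of the previous theorem, splitting the count into active and inactive leaves relative to the stopping condition $x_i<x_j$ with $i<j$. First I would identify the active leaves: these are the weak-ordering chains in $\W(n)$ that contain no occurrence of $x_i<x_j$ with $i<j$. Since a strict inequality $x_i<x_j$ is forbidden precisely when the smaller index sits in the strictly smaller block, an active chain can never have an index appearing in a block that is strictly below a block containing a smaller index. I would argue that such chains correspond to weak orderings in which, reading blocks from smallest to largest, the indices within and across blocks are weakly decreasing in a controlled way; concretely, each active chain is built by placing the indices so that no strict ascent in index-value occurs. I expect the count of active leaves to be $a_n=2^{n-1}$, since at each insertion step the new largest index $x_n$ can only be attached in a way that does not create a forbidden strict ascent, giving a binary choice.

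Next I would compute $\Delta_j$, the number of leaves becoming inactive at level $j$. As in the first theorem, every leaf that becomes inactive at level $j$ arises from an active node at level $j-1$ by inserting $x_j$ so that a forbidden pattern $x_i<x_j$ is created for the first time. For each active chain of length $j-1$, I would count the insertions of $x_j$ (with operation $<$ or $=$) that introduce a strict ascent $x_i<x_j$ with $i<j$; since $j$ is the largest index, inserting $x_j$ strictly above any existing block automatically creates such an ascent (because some smaller index lies below it), while inserting it at the bottom or tying it does not. Counting the positions that trigger the stopping condition should yield a clean formula for $\Delta_j$ in terms of $a_{j-1}=2^{j-2}$ and the number of available strictly-higher positions.

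Then I would assemble $b_n=\sum_{j=1}^n \Delta_j$ by summing the telescoping-free series, and finally set $w_n=a_n+b_n$. Matching the target value $w_n=(n-1)2^{n-1}+1$ against $a_n=2^{n-1}$ forces $b_n=(n-2)2^{n-1}+1$, so I would verify that the summation of $\Delta_j$ reproduces exactly this expression; the presence of the additive constant $1$ suggests one boundary term (likely the single chain at the bottom or a degenerate insertion) must be tracked carefully.

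The main obstacle I anticipate is correctly characterizing the active leaves and precisely counting, for each active chain of length $j-1$, how many insertions of the new maximal variable $x_j$ first violate $x_i<x_j$. Because the stopping condition is asymmetric in the indices (it requires the smaller index to sit strictly below), I must be careful to count insertion positions relative to the existing block structure rather than treating all positions symmetrically; getting this combinatorial bookkeeping right, so that the $\Delta_j$ sum telescopes into $(n-2)2^{n-1}+1$ with the correct constant term, is the delicate step.
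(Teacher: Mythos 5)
Your count of the active leaves agrees with the paper's: avoiding the condition forces $x_i \ge x_j$ for every pair $i<j$, so an active chain must have the form $x_n \op x_{n-1} \op \dotsb \op x_1$ with each $\op\in\{<,=\}$, giving $a_n = 2^{n-1}$. The gap is in the inactive count, and it is twofold. First, your characterization of which insertions of $x_j$ trigger the stopping condition is wrong: you assert that inserting $x_j$ ``at the bottom or tying it'' is safe, but tying $x_j$ to any block other than the bottom one does create an occurrence, because every variable in a lower block is then strictly below $x_j$. Concretely, take the active chain $x_2<x_1$ and tie $x_3$ to the top block: the result $x_2<x_1=x_3$ contains $x_2<x_3$ and is inactive, yet your rule classifies it as safe. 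Under your rule an active chain with $\ell$ blocks contributes only $\ell$ inactive children (the strict insertions above each block) instead of the correct $2\ell-1$ (all of its $2\ell+1$ children except ``strictly below everything'' and ``tied to the bottom block''); this yields $\Delta_3=3$ and $w_3=8$ rather than the correct $\Delta_3=4$ and $w_3=9$. Second, you never actually compute $\Delta_j$: you propose to ``verify'' that the $\Delta_j$ sum to the value $b_n=(n-2)2^{n-1}+1$ forced by the target formula, which is circular; producing $\Delta_j$ is precisely the content of the proof.

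To close the gap along your lines, note that the number of inactive children of an active chain is not uniform (it depends on its number of blocks $\ell$), so you must refine the count by $\ell$: there are $\binom{j-2}{\ell-1}$ active chains at level $j-1$ with $\ell$ blocks, hence $\Delta_j=\sum_{\ell=1}^{j-1}\binom{j-2}{\ell-1}(2\ell-1)=(j-1)2^{j-2}$. The paper avoids this refinement with a direct construction: every inactive leaf at level $j$ arises exactly once by taking an active chain $x_{j-1}\op\dotsb\op x_1$ at level $j-1$ and replacing $x_k$ by $x_k<x_j$ for some $k\in\{1,\dots,j-1\}$; this map is a bijection onto the inactive leaves at level $j$ (it does not send a chain to its own children in the tree, but that is irrelevant for counting), so $\Delta_j=(j-1)a_{j-1}=(j-1)2^{j-2}$ at once. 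Either way, $b_n=\sum_{j=1}^{n}(j-1)2^{j-2}=(n-2)2^{n-1}+1$ and $w_n=a_n+b_n=(n-1)2^{n-1}+1$.
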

\begin{proof}
The stopping condition implies that the indices of any active weak-ordering chain in $\W(n)$ must appear in decreasing order $x_n \op x_{n-1} \op \dotsb \op x_1$, and we can choose $\op$ to be either $<$ or $=$. Hence $a_n = 2^{n-1}$.
 
On the other hand, every active weak-ordering chain at level $j-1$ generates $j-1$ inactive leafs, obtained by replacing $x_k$ with $x_k<x_j$ for $k\in\{1,\dots,j-1\}$. Therefore,
\[ \Delta_j = (j-1)a_{j-1} = (j-1)2^{j-2}, \] 
which implies  $b_n = \sum\limits_{j=1}^n (j-1) 2^{j-2} = (n-2)2^{n-1}+1$. 

In conclusion, $w_n = a_n + b_n = 2^{n-1} + (n-2)2^{n-1} + 1 = (n-1)2^{n-1}+1$.
\end{proof}

\begin{theorem}[Stopping condition $\boldsymbol{x_i\le x_j}$]
If $w_n$ is the number of weak-ordering chains in $\W(n)$, subject to the stopping condition $x_i \le x_j$ with $i<j$, then $w_n = n^2-n+1$. These are the central polygonal numbers $1, 3, 7, 13, 21, 31, 43, 57,\dots,$ cf.\ \cite[A002061]{oeis}.
\end{theorem}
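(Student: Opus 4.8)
The plan is to reuse verbatim the bookkeeping of the two preceding theorems: first pin down the active leaves to get $a_n$, then count the leaves $\Delta_j$ that become inactive at level $j$, sum these to obtain $b_n$, and finally report $w_n=a_n+b_n$. The one genuinely new input is the description of the active chains. A chain avoids the stopping condition $x_i\le x_j$ (with $i<j$) exactly when it never has $x_i<x_j$ \emph{nor} $x_i=x_j$ for $i<j$, i.e.\ when $x_i>x_j$ for \emph{every} pair $i<j$. This forces $x_1>x_2>\dots>x_n$, and there is precisely one such chain. So the first step records $a_n=1$ for all $n\ge 1$, in sharp contrast to the $2^{n-1}$ of the previous theorem, where ties were still permitted.

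Next I would compute $\Delta_j$. Since only active nodes spawn descendants, every leaf that turns inactive at level $j$ is a child of the unique active node at level $j-1$, namely the strict chain $x_1>x_2>\dots>x_{j-1}$. I would count its children in the full tree of $\W(j)$ by inserting $x_j$: because this chain has $j-1$ distinct blocks, there are $j-1$ insertions that merge $x_j$ into an existing block (a tie) and $j$ insertions that place $x_j$ strictly into one of the $j$ gaps, for $2j-1$ children in total. Exactly one of these keeps the chain active, namely placing $x_j$ strictly below everything to get $x_1>\dots>x_{j-1}>x_j$; any other placement produces some $x_i$ with $i<j$ and $x_i\le x_j$, triggering the stop. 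Hence $\Delta_j=(2j-1)a_{j-1}-a_j=(2j-1)-1=2(j-1)$ for $j\ge 2$, with $\Delta_1=0$.

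It then remains to sum $b_n=\sum_{j=1}^n\Delta_j=\sum_{j=2}^n 2(j-1)=n(n-1)$ and conclude $w_n=a_n+b_n=1+n^2-n=n^2-n+1$, the central polygonal numbers. The only point requiring care—the rest being routine summation—is the child count: one must verify that the active node genuinely has $j-1$ blocks (which holds precisely because the active chain is strict, with no ties to collapse its block count) so that it has $2(j-1)+1$ children rather than fewer, and that exactly one of those children escapes the stopping condition. Once that is checked, the formula falls out immediately.
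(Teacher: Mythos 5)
Your proposal is correct and follows essentially the same route as the paper: identify the unique active chain $x_n < x_{n-1} < \dots < x_1$ (so $a_n = 1$), show each active node at level $j-1$ contributes $\Delta_j = 2(j-1)$ newly inactive leaves, and sum to get $w_n = 1 + n(n-1) = n^2 - n + 1$. The only cosmetic difference is that you count all $2j-1$ children of the active node and subtract the single active one, whereas the paper directly lists the $2(j-1)$ inactive children (a tie or a strict placement of $x_j$ above each $x_k$); the two counts are identical.
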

\begin{proof}
The only active chain at level $n$ is $x_n < x_{n-1} < \dotsb < x_1$, so $a_n = 1$. Moreover, the active chain $x_{j-1} < \dotsb < x_1$ generates $2(j-1)$ inactive leaves at level $j$, obtained by replacing $x_k$ with either $x_k<x_j$ or $x_k=x_j$ for $k\in\{1,\dots,j-1\}$. Therefore, $b_n = \sum\limits_{j=1}^n 2(j-1) = n(n-1)$, which implies $w_n = a_n + b_n = n^2-n+1$.
\end{proof}

\section[]{Stopping condition $\boldsymbol{x_{i_1}\!<x_{i_2}\!<x_{i_3}}$ with $\boldsymbol{i_1\!< i_2\!< i_3}$}
\label{sec:123condition}

Before we proceed, let us review some of the basic objects in the study of patterns in permutations. We refer to the book by Kitaev \cite{Kitaev11} for more details.

A permutation of size $n$ is a one-to-one function $\sigma:[n]\to[n]$. We use the common one-line notation $\sigma=\sigma(1)\cdots\sigma(n)$ and denote by $S_n$ the set of all permutations of size $n$. The {\em reverse} of $\sigma$ is the permutation $\sigma^r = \sigma(n)\cdots\sigma(1)$, and the {\em complement} is $\sigma^c = \sigma'(1)\cdots\sigma'(n)$ where $\sigma'(i)= n+1-\sigma(i)$. A permutation $\sigma$ is said to have an {\em ascent} at position $i$ if $\sigma(i)<\sigma(i+1)$, and it has a {\em descent} at $i$ if $\sigma(i)>\sigma(i+1)$.

An occurrence of a pattern $\tau$ in a permutation $\sigma$ is a subsequence in $\sigma$ (of length $|\tau|$) whose entries are in the same relative order as those in $\tau$. For example, the permutation $23154$ has two occurrences of the pattern $123$ (namely $235$ and $234$), but the permutation $53214$ avoids the pattern $123$. We use the standard notation $S_n(\tau)$ to denote the set of all permutations in $S_n$ that avoid the pattern $\tau$.

\medskip
As mentioned in the introduction, $\W(n)$ is in one-to-one correspondence with the set of ordered partitions of $[n]$. For such an ordered partition $\pi$, we let $\sigma_\pi$ be the {\em underlined permutation} obtained by merging the parts of $\pi$ and underlining the entries coming from the same block of $\pi$. In this section, we adopt the convention of writing adjacent underlined entries in decreasing order. For example,
\begin{gather*} 
x_2<x_4=x_5<x_1<x_3 \;\longleftrightarrow\; \pi = 2\,|\,54\,|\,1\,|\,3 \;\longleftrightarrow\; \sigma_\pi = 2\,\underline{54}\,13, \\
x_2=x_4=x_6<x_5<x_1=x_3 \;\longleftrightarrow\; \pi = 642\,|\,5\,|\,31 \;\longleftrightarrow\; \sigma_\pi = \underline{642}\,5\,\underline{31}. 
\end{gather*}

Let $\V_n(\sigma)$ be the set of chains in $\W(n)$ projecting to $\sigma$. A descent $\sigma(i)>\sigma(i+1)$ in the permutation $\sigma$ could come from $x_{\sigma(i)}<x_{\sigma(i+1)}$ or $x_{\sigma(i)}=x_{\sigma(i+1)}$ in the chain. Thus, if $\sigma$ has $d$ descents, then $\V_n(\sigma)$ has $2^d$ elements. Moreover, if a chain contains an increasing subsequence $x_{i_1}<x_{i_2}<x_{i_3}$, then the projected permutation must contain a $123$-pattern. As a consequence, the set of active chains in $\W(n)$, subject to the stopping condition $x_{i_1}<x_{i_2}<x_{i_3}$ with $i_1 < i_2 < i_3$, is the union
\begin{equation*}
  \bigcup_{\sigma \in S_n(123)} \V_n(\sigma)
   = \bigcup_{d=0}^{n-1} \bigcup_{\;\sigma \in S_n^d(123)} \V_n(\sigma),
\end{equation*}
where $S_n(123)$ is the set of 123-avoiding permutations on $[n]$, and 
$S_n^d(123)$ denotes the subset of permutations in $S_n(123)$ having exactly $d$
descents. Therefore, the number of active leaves at level $n$ is given by
\begin{equation}\label{eq:123active}
  a_n = \sum_{d=0}^{n-1} \sum_{\;\sigma \in S^d_n(123)} \abs{\V_n(\sigma)} = \sum_{d=0}^{n-1} 2^d e_{n,d},
\end{equation}
where $e_{n,d}=\abs{S_n^d(123)}$. Let 
\[ E(x,y)= \sum_{n=1}^{\infty}\sum_{d=0}^{n-1} e_{n,d\,} x^n y^d. \] 
Barnabei et al.\ \cite[Thm.~6]{BBS10} gave a closed form for $1+E(x,y)$ from which we deduce
\begin{equation}\label{eq:E(x,y)}
  E(x,y) = \frac{1  - 2xy(1+x-xy) - \sqrt{1 - 4xy(1+x-xy)}\ }{2xy^2(1+x-xy)}.
\end{equation}
Therefore, the generating function $A(x) = \sum\limits_{n=1}^{\infty} a_n x^n$ satisfies
\begin{equation*}
 A(x) = E(x,2) = \frac{(1-2x)^2 - \sqrt{1 - 8x + 8x^2}\ }{8x(1-x)}.
\end{equation*}
This result is consistent with Chen et al.\ \cite[Cor.~2.3]{CDZ14}. Moreover, since $A(x)$ can be expressed in terms of the Catalan generating function, namely $A(x) = \frac12 C(2x(1-x))$, we have
\begin{equation}\label{eq:123active_closedForm}
 a_n = \sum_{j=0}^n (-1)^{j}2^{n-j-1} \binom{n-j}{j} C_{n-j} \;\text{ for } n\ge 1,
\end{equation}
where $C_m$ denotes the $m$-th Catalan number. This sequence starts with $1$, $3$, $12$, $56$, $284$, $1516$, $8384$, $47600,\dots,$ cf.\ \cite[A226316]{oeis}.

Using a version of Krattenthaler's bijection between Dyck paths and $123$-avoiding permutations (cf.\ \cite{Kitaev11,Kra01}), it can be checked that the set of active chains in $\W(n)$ with stopping condition $x_{i_1}<x_{i_2}<x_{i_3}$ is in one-to-one correspondence with the set of Dyck paths of semilength $n$ where valleys and triple down-steps come in 2 colors.\footnote{A Dyck path of semilength $n$ is a lattice path from $(0,0)$ to $(0,2n)$ with steps $\mathsf{U} = (1, 1)$ and $\mathsf{D} = (1, -1)$, never going below the $x$-axis. A {\em valley} is a subpath $\mathsf{DU}$.}
 
\bigskip
Our connection between weak-ordering chains and permutations makes it clear that a leaf is inactive if the associated permutation has a $123$ pattern. Thus, in order to count the elements that become inactive at level $n$, we need to enumerate the following set:

For $n>3$ and $1\le d\le n-3$, define
\begin{equation}\label{eq:just123}
 \G^{d}_{n}(123) = \{\sigma\in S_n \,|\, \sigma \text{ has a $123$ pattern, $d$ descents, and } \sigma'\in S_{n-1}(123)\},
\end{equation}
where $\sigma'\in S_{n-1}$ denotes the permutation obtained from $\sigma\in S_n$ by removing $n$. In addition, we define $\G^{d}_{1}(123)=\G^{d}_{2}(123)=\varnothing$ and $\G^{0}_{3}(123)=\{123\}$. 

\begin{proposition} \label{prop:gnd}
If $g_{n,d}=\abs{\G^{d}_{n}(123)}$, then $g_{3,0}=1$, and for $n>3$ and $1\le d\le n-3$,
\begin{equation}\label{eq:gnd}
 g_{n,d} = (d+1)e_{n-1,d} + (n-d) e_{n-1,d-1} - e_{n,d},
\end{equation}
where $e_{n,d}=\abs{S_n^d(123)}$. Therefore, $G(x,y)= x^3 + \sum\limits_{n=4}^{\infty}\sum\limits_{d=1}^{n-3} g_{n,d\,} x^n y^d$ satisfies
\begin{equation*}
 G(x,y) = x+(x-1)E(x,y)+x^2yE_x(x,y)+(xy-xy^2)E_y(x,y).
\end{equation*}
where $\langle E_x,E_y\rangle = \nabla E$, and $E(x,y)$ is the function in \eqref{eq:E(x,y)}. In other words,
\begin{equation*}
 G(x,y) = \frac{1-4xy(1+x-xy) +2x^2y - (1-2xy) \sqrt{1-4xy(1+x-xy)}}{2xy^2\sqrt{1-4xy(1+x-xy)}}.
\end{equation*}
\end{proposition}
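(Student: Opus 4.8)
The plan is to realize $g_{n,d}$ as a difference of two insertion counts and then transcribe the resulting recurrence into a functional equation for $G$ using the operators $x\partial_x$ and $y\partial_y$. First I would reinterpret the set $\G^{d}_{n}(123)$. Deleting the largest entry $n$ from a $123$-avoiding permutation can never create a $123$ pattern, so the permutations $\sigma\in S_n$ with $\sigma'\in S_{n-1}(123)$ split into two disjoint classes: those that avoid $123$ (which are exactly the elements of $S_n(123)$) and those that contain $123$ (which are exactly the elements of $\bigcup_d\G^{d}_{n}(123)$). Letting $h_{n,d}$ denote the number of $\sigma\in S_n$ with $d$ descents and $\sigma'\in S_{n-1}(123)$, this partition reads $h_{n,d}=e_{n,d}+g_{n,d}$, so it is enough to evaluate $h_{n,d}$.

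Next I would count $h_{n,d}$ by insertion. Each such $\sigma$ arises uniquely by inserting $n$ into one of the $n$ gaps of a permutation $\tau\in S_{n-1}(123)$. A short case check shows that inserting $n$ at a descent of $\tau$ or at the right end preserves the number of descents, whereas inserting it at an ascent of $\tau$ or at the left end raises it by one. Thus a permutation $\tau$ with $d'$ descents produces $d'+1$ permutations with $d'$ descents and $(n-1)-d'$ permutations with $d'+1$ descents; summing over $\tau\in S_{n-1}(123)$ gives $h_{n,d}=(d+1)e_{n-1,d}+(n-d)e_{n-1,d-1}$, and hence \eqref{eq:gnd}. I would also record the range: if $\sigma$ contains $123$ while $\sigma'$ avoids it, then $n$ must sit to the right of an ascending pair, so its insertion position $p$ satisfies $p\ge 3$; bounding the at most $p-3$ descents in the prefix, the single descent created by $n$, and the at most $n-p-1$ descents in the suffix gives $d\le n-3$, while $\tau$ cannot be the identity, forcing $d\ge 1$. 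This confirms that $g_{n,d}=0$ off the stated range (the only exception being $g_{3,0}=1$).

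I would then pass to generating functions. Since $g_{n,d}=0$ outside the stated range and $g_{1,d}=0$, we have $G(x,y)=\sum_{n\ge 2}\sum_{d\ge 0}g_{n,d}\,x^ny^d$, and I may sum the recurrence term by term. Using $\sum_{n,d}d\,e_{n,d}x^ny^d=yE_y$ and $\sum_{n,d}n\,e_{n,d}x^ny^d=xE_x$, together with the index shift $m=n-1$, the three summands of \eqref{eq:gnd} contribute $xE+xyE_y$, then $x^2yE_x-xy^2E_y$, and finally $-(E-x)$; here the stray $+x$ is exactly the missing $n=1$ term $e_{1,0}\,x$ that $E$ carries but the sum for $G$ does not. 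Adding these yields $G=x+(x-1)E+x^2yE_x+(xy-xy^2)E_y$, as claimed.

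Finally I would substitute the closed form of $E(x,y)$ stated above and simplify. Writing $r=\sqrt{1-4xy(1+x-xy)}$ so that $r^2=1-4xy(1+x-xy)$, one differentiates $E$, rationalizes $E_x$ and $E_y$ using this identity, and collects everything over the common denominator $2xy^2r$ to obtain the displayed closed form. I expect the main obstacle to be the descent bookkeeping in the insertion step—verifying precisely which of the $n$ gaps preserve and which increase the descent count, and treating the two end gaps consistently—together with tracking the lone $+x$ boundary correction when the recurrence is summed; the concluding algebraic simplification is purely mechanical and is best confirmed with a computer algebra system.
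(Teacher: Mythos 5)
Your proposal is correct and follows essentially the same route as the paper: both count, via insertion of $n$ into a $123$-avoiding permutation of size $n-1$, all $\sigma\in S_n$ with $d$ descents whose reduction avoids $123$ (getting $(d+1)e_{n-1,d}+(n-d)e_{n-1,d-1}$), subtract the $e_{n,d}$ permutations that themselves avoid $123$, and then pass to generating functions by routine manipulation. Your explicit framing via $h_{n,d}=e_{n,d}+g_{n,d}$, the verification that $g_{n,d}$ vanishes off the stated range, and the tracking of the boundary term $+x$ are welcome details that the paper leaves implicit, but they do not change the argument.
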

\begin{proof}
For $n>3$, every $\sigma$ in $\G^{d}_{n}(123)$ can be generated by inserting $n$ into a permutation $\sigma'\in S^d_{n-1}(123) \cup S^{d-1}_{n-1}(123)$ at a position where it creates a 123 pattern.

If $\sigma'$ has $d$ descents, then $n$ may only be inserted at a descent or at the last position of $\sigma'$. Thus, each $\sigma'\in S^{d}_{n-1}(123)$ generates $d+1$ permutations in $S^{d}_{n}$. On the other hand, if $\sigma'$ has $d-1$ descents, then $n$ will have to be inserted at one of the $n-d$ available ascents of $\sigma'$ in order to create an extra descent. Therefore, each $\sigma'\in S^{d-1}_{n-1}(123)$ generates $n-d$ permutations in $S^{d}_{n}$. 

Together, the above insertion procedures generate $(d+1)e_{n-1,d} + (n-d) e_{n-1,d-1}$ permutations of size $n$ with $d$ descents. However, since the elements of $\G^{d}_{n}(123)$ are required to have a 123 pattern, we need to remove the $e_{n,d}$ permutations that avoid 123.

The first statement about $G(x,y)$ then follows from \eqref{eq:gnd} by means of routine algebraic manipulations. The closed form statement is just a consequence of \eqref{eq:E(x,y)}.
\end{proof}

\begin{theorem} \label{thm:x<y<z}
If $w_n$ is the number of weak-ordering chains in $\W(n)$, subject to the stopping condition $x_{i_1}<x_{i_2}<x_{i_3}$ with $i_1 < i_2 < i_3$, then
\begin{equation*}
  w_n = \sum_{d=0}^{n-1}2^d e_{n,d} + \sum_{j=3}^n\sum_{d=0}^{j-3}2^d g_{j,d}.
\end{equation*}
The sequence $(w_n)_{n\in\mathbb{N}}$ starts with $1, 3, 13, 69, 401, 2433, 15121, 95441,\dots,$ and $W(x) = \sum\limits_{n=1}^\infty w_n x^n$ satisfies
\[ W(x) = \frac{x + x \sqrt{1 - 8x + 8x^2}}{2(1-x) \sqrt{1 - 8x + 8x^2}}. \]
\end{theorem}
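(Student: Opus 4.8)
The plan is to write $w_n=a_n+b_n$ and to handle the active and inactive leaves separately through the permutation dictionary already in place. The active part is immediate: by \eqref{eq:123active} we have $a_n=\sum_{d=0}^{n-1}2^d e_{n,d}$, with generating function $A(x)=E(x,2)$. Everything hinges on computing $b_n$, and for this I would first isolate $\Delta_j$, the number of leaves that become inactive exactly at level $j$.

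The crux is to identify the newly inactivated chains. A chain becomes inactive at level $j$ precisely when it acquires a triple $x_{i_1}<x_{i_2}<x_{i_3}$ with $i_1<i_2<i_3$ at step $j$ but not before. Under the correspondence with underlined permutations, containing such a triple is the same as the projected permutation $\sigma\in S_j$ containing a $123$ pattern, and deleting the just-inserted variable $x_j$ corresponds to deleting the largest letter $j$ from $\sigma$, producing $\sigma'$. Thus a chain is newly inactive at level $j$ exactly when $\sigma$ contains $123$ while $\sigma'\in S_{j-1}(123)$, that is, when $\sigma\in\G^d_j(123)$ for some $d$. Because deleting letters can never create a $123$ pattern, $\sigma'\in S_{j-1}(123)$ forces all further ancestors to avoid $123$ as well, so every chain projecting to such a $\sigma$ genuinely lives in the restricted tree with an active parent; none is missed, and chains with distinct images lie in disjoint fibers. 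Since a permutation with $d$ descents is the image of exactly $2^d$ chains in $\V_j(\sigma)$, each $\sigma\in\G^d_j(123)$ contributes $2^d$ newly inactive leaves, and I would conclude $\Delta_j=\sum_{d=0}^{j-3}2^d g_{j,d}$. Summing over $j\le n$ gives $b_n=\sum_{j=3}^n\sum_{d=0}^{j-3}2^d g_{j,d}$, and $w_n=a_n+b_n$ is the first displayed formula.

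For the generating function I would read the two identities as $A(x)=E(x,2)$ and $\sum_{j}\Delta_j x^j=G(x,2)$, the latter because evaluating $G$ at $y=2$ weights each $g_{j,d}$ by $2^d$ exactly as in $\Delta_j$ (the stray $x^3$ in $G$ being precisely the term $2^0 g_{3,0}=1$). As $b_n$ is the partial-sum transform of the $\Delta_j$, the inactive series is $B(x)=G(x,2)/(1-x)$, so $W(x)=A(x)+B(x)=E(x,2)+G(x,2)/(1-x)$. The pleasant accident that makes this tractable is that $4xy(1+x-xy)$ collapses to $8x-8x^2$ at $y=2$, so that both $E(x,2)$ and $G(x,2)$ carry the single radical $\sqrt{1-8x+8x^2}$.

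The last step is the simplification, which I expect to be the one genuinely delicate point. Placing both terms over the common denominator $8x(1-x)\sqrt{1-8x+8x^2}$ and expanding, the radical terms combine with coefficient $(1-4x+4x^2)-(1-4x)=4x^2$ while the rational terms combine as $-(1-8x+8x^2)+(1-8x+12x^2)=4x^2$, so the numerator collapses to $4x^2\bigl(1+\sqrt{1-8x+8x^2}\bigr)$ and $W(x)$ reduces to the claimed closed form. I would carry this cancellation by hand rather than trust a one-line expansion, since it works only because of these two exact coincidences, and finally confirm the opening terms $1,3,13,69,401,\dots$ against the series of $W(x)$ as a sanity check.
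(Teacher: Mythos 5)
Your proposal is correct and follows essentially the same route as the paper's proof: the same split $w_n=a_n+b_n$ with $A(x)=E(x,2)$, the same identification of newly inactive chains at level $j$ with $\bigcup_d \G_j^d(123)$ weighted by $2^d$ descents giving $\Delta_j=\sum_d 2^d g_{j,d}$, and the same generating-function assembly $B(x)=G(x,2)/(1-x)$ via Proposition~\ref{prop:gnd}. Your explicit verification of the final cancellation (both the radical and rational parts collapsing to $4x^2$) is a detail the paper leaves implicit, and it checks out.
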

\begin{proof}
By \eqref{eq:123active}, the number of active leaves after $n$ steps is $a_n=\sum\limits_{d=0}^{n-1}2^d e_{n,d}$, and their generating function $A(x)$ is given by
\[ A(x) = E(x,2) = \frac{(1-2x)^2 - \sqrt{1 - 8x + 8x^2}\ }{8x(1-x)}. \]
On the other hand, a weak-ordering chain that becomes inactive at level $j\ge 3$ leads to a $\sigma\in S_j$ that has a $123$ pattern, and such that $\sigma'\in S_{j-1}(123)$. The number of such permutations having $d$ descents is given by $g_{j,d}$ in \eqref{eq:gnd}. Now, since a descent $\sigma(i)>\sigma(i+1)$ in $\sigma$ comes from either $x_{\sigma(i)}<x_{\sigma(i+1)}$ or $x_{\sigma(i)}=x_{\sigma(i+1)}$ in the chain, there are $2^d g_{j,d}$ chains projecting to the permutations in $\G_j^d(123)$. Therefore,
\[ b_1=b_2=0, \text{ and }\; \Delta_j = b_j-b_{j-1} = \sum_{d=0}^{j-3}2^d g_{j,d} \text{ for } j\ge 3. \]
The formula for $w_n= a_n + b_n$ follows from the fact that $b_n=\sum\limits_{j=3}^n \Delta_j$.

Finally, using Proposition~\ref{prop:gnd} we can write $B(x)=\sum\limits_{n=1}^\infty b_n x^n$ as
\[ B(x) = \frac{G(x,2)}{1-x} = \frac{1-8x+12x^2 - (1-4x) \sqrt{1-8x+8x^2}}{8x(1-x)\sqrt{1-8x+8x^2}}, \]
and the claimed expression for $W(x)$ is obtained by combining $A(x) + B(x)$.
\end{proof}

\section[]{Stopping condition  $\boldsymbol{x_{i_1}\!\le x_{i_2}\!\le x_{i_3}}$ with $\boldsymbol{i_1\!< i_2\!< i_3}$}
\label{sec:condition}

This case is more restrictive than the previous one. For example, the chains
\begin{equation*}
 x_1=x_2<x_3, \;\; x_1<x_2=x_3, \,\text{ and }\, x_1=x_2=x_3, 
\end{equation*}
that were previously active leaves, will now turn into inactive leaves.

In order to handle the stopping condition at the permutation level in a consistent manner (i.e.\ using descents in the permutation to mark the places where the corresponding chain may have an $=$ symbol), it is convenient to consider underlined $321$-avoiding permutations. We will then use the complement map to obtain underlined $123$-avoiding permutations that correspond to the weak-ordering chains with the stopping condition $x_{i_1}\le x_{i_2}\le x_{i_3}$. 

This process will be explained and illustrated in the proof of the following proposition.

\begin{proposition} \label{prop:321active}
The set of active leaves in $\W(n)$, subject to the stopping condition $x_{i_1}\le x_{i_2}\le x_{i_3}$ with $i_1 < i_2 < i_3$, is in one-to-one correspondence with the set of Dyck paths of semilength $n$ where each subpath {\sf UDD} can take on two colors. If we let $a_n$ denote the number of such paths, then their generating function $A(x)$ satisfies
\[ 1+ A(x) = \frac{1 - \sqrt{1 - 4x - 4x^2}}{2x(1+x)} = C(x(1+x)), \]
where $C(x)$ is the Catalan function, and therefore, $a_n = \sum\limits_{j=0}^n \binom{n-j}{j} C_{n-j}$. This gives the sequence $1$, $3$, $9$, $31$, $113$, $431$, $1697$, $6847,\dots,$ cf.\ \cite[A052709]{oeis}.
\end{proposition}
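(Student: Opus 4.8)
The plan is to mirror the strategy used for the $x_{i_1}<x_{i_2}<x_{i_3}$ case, but to work with $321$-avoiding permutations via the complement map so that descents consistently mark the positions where an $=$ symbol is permitted. First I would set up the correspondence: since the stopping condition now forbids any chain containing $x_{i_1}\le x_{i_2}\le x_{i_3}$ with $i_1<i_2<i_3$, an active chain must avoid a weak increasing subsequence of length three. I would argue that, under the convention of writing adjacent underlined entries in decreasing order, the underlined permutation $\sigma_\pi$ of an active chain must avoid the pattern $123$, and moreover that an underlined (tied) pair at positions $i,i+1$ corresponds exactly to a descent $\sigma(i)>\sigma(i+1)$. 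The key extra constraint, relative to Section~\ref{sec:123condition}, is that a tie between consecutive entries is only allowed at a descent that is not part of a longer forbidden configuration; tracking precisely which descents may be ``upgraded'' to ties is what distinguishes this case.

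Next I would pass to the complement. Applying $\sigma\mapsto\sigma^c$ turns $123$-avoidance into $321$-avoidance and interchanges the roles of ascents and descents, so that the allowed tie-positions become the ascents of a $321$-avoiding permutation. The reason for preferring $321$-avoiders is the clean bijective picture with Dyck paths: I would invoke Krattenthaler's bijection (cf.\ \cite{Kitaev11,Kra01}) between $S_n(321)$ and Dyck paths of semilength $n$, under which the statistic governing where ties may be inserted is carried to a simple local feature of the path. The core claim I expect to need is that an active leaf corresponds to a Dyck path together with an independent binary choice at each occurrence of a specified local pattern, and that this local pattern is precisely the subpath {\sf UDD} (a peak immediately followed by a down-step). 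Establishing that the admissible tie-positions match {\sf UDD} subpaths under the bijection is the main obstacle, and I would verify it by checking how Krattenthaler's map sends the relevant ascent/descent data of the permutation to peaks and double-descents of the path, being careful about boundary cases at the ends of the permutation.

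Once the bijection is in place, the enumeration is a generating-function computation. Counting Dyck paths in which each {\sf UDD} subpath carries one of two colors amounts to a weighted Catalan count. I would set up the standard first-return decomposition of a Dyck path, $P = \mathsf{U}\,P_1\,\mathsf{D}\,P_2$, and translate the ``{\sf UDD} in two colors'' weight into a functional equation for $1+A(x)$. The weighting interacts with the decomposition only through whether the descent closing the first arch is immediately followed by another descent, so the bookkeeping reduces to a quadratic equation; solving it should yield
\[
1+A(x) = \frac{1-\sqrt{1-4x-4x^2}}{2x(1+x)} = C\bigl(x(1+x)\bigr),
\]
after which the identity $1+A(x)=C(x(1+x))$ is immediate by substituting $x(1+x)$ into the Catalan functional equation $C=1+xC^2$. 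Finally, extracting coefficients from the composition $C(x(1+x))$ via the binomial expansion of $C_{n-j}\,(x(1+x))^{n-j}$ gives the closed form $a_n=\sum_{j=0}^n\binom{n-j}{j}C_{n-j}$, matching the stated sequence. The only genuinely delicate point is the first one—confirming that the complement-plus-Krattenthaler pipeline sends the tie-admissible positions exactly to {\sf UDD} subpaths—so I would devote most of the write-up to making that correspondence explicit and checking it on small cases such as $n=3$.
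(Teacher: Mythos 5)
Your high-level architecture---active chains $\to$ underlined pattern-avoiding permutations $\to$ complement $\to$ Dyck paths with two-colorable $\mathsf{UDD}$'s---is the paper's, but the step you yourself single out as the main obstacle fails as you have set it up, and not because of missing care at boundary cases: it is broken by the convention you carry over from Section~\ref{sec:123condition}. If tied pairs are written in decreasing order, the ties of an active chain sit at descents of a $123$-avoiding permutation, hence at ascents of its $321$-avoiding complement, and these admissible positions are neither independent nor in bijection with $\mathsf{UDD}$'s. Concretely, take $n=3$: the three active chains $x_3<x_2<x_1$, $x_2=x_3<x_1$, $x_3<x_1=x_2$ have underlined forms $321$, $\underline{32}1$, $3\underline{21}$ in your convention (the double tie $\underline{321}$, i.e.\ $x_1=x_2=x_3$, is inactive), so all three lie over the same underlying permutation $321$: a fiber of size $3$, not a power of $2$, whose two descents are each admissible separately but not jointly. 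After complementing, the underlying $321$-avoider is the identity $123$, whose Dyck path $\mathsf{UDUDUD}$ contains no $\mathsf{UDD}$ at all. Hence no bijection of Krattenthaler type can realize ``an independent binary choice at each $\mathsf{UDD}$'' along your pipeline; the small-case check you propose would expose exactly this.

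The paper's proof turns on flipping the convention at precisely this point: tied pairs are written smallest element first, so ties sit at \emph{ascents} of the $123$-avoiding projection $\sigma$. There, admissibility is automatic and independent: marking an ascent can never create the stopping condition, since a third element completing $x_{i_1}\le x_{i_2}\le x_{i_3}$ would force a $123$ pattern in $\sigma$ itself (in particular $\sigma$ has no two consecutive ascents, so triple blocks cannot even arise). Thus each $\sigma\in S_n(123)$ carries exactly $2^{\mathrm{asc}(\sigma)}$ active chains; complementing yields $321$-avoiders with ties at descents, and under the dots-at-$\mathsf{NE}$-turns bijection those descents correspond precisely to the $\mathsf{NEE}=\mathsf{UDD}$ subpaths, which is the proposition. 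With the bijection repaired, your enumeration is correct and is genuinely different from the paper's: the paper evaluates the Barnabei--Bonetti--Silimbani descent generating function, $A(x)=\tfrac12 E(2x,\tfrac12)$, whereas your first-return decomposition $P=\mathsf{U}P_1\mathsf{D}P_2$ is self-contained---a factor of $2$ enters exactly when $P_1$ ends in the peak $\mathsf{UD}$ (note the closing $\mathsf{D}$ is \emph{preceded} by a peak; it is never ``followed by another descent,'' since $P_2$ starts with $\mathsf{U}$), and since colored paths ending in $\mathsf{UD}$ have generating function $x\bigl(1+A(x)\bigr)$, one gets $1+A=1+x(1+x)(1+A)^2$, i.e.\ $1+A=C\bigl(x(1+x)\bigr)$, with no outside input.
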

\begin{proof}
We will prove the statement by establishing a bijection that relies on a known map between Dyck paths and 321-avoiding permutations, see e.g.\ \cite{Kitaev11}. We represent a Dyck path as a lattice path from $(0,0)$ to $(n,n)$, starting with an \textsf{N}-step $(0,1)$, ending with an $\textsf{E}$-step $(1,0)$, and never going below the line $y=x$. We will allow \textsf{NEE} to take on two colors.

Given a Dyck path $P$ of the above type, drawn on a coordinate grid, place a dot in every cell bounded by an \textsf{NE}-turn of the path, and then place dots in increasing order (from left to right) so that every column below the path has exactly one dot with no two dots in the same row. This gives the plot of a 321-avoiding permutation $\sigma_P$, and we underline adjacent elements of the permutation if the corresponding steps in the path are orange. For example, for $n=3$ we have the following 9 elements:

\smallskip
\begin{center}
\def\sf{0.48}
\def\rr{0.08}
\def\Rr{0.14}
\tikz[scale=\sf]{
 \draw[step=1,gray!60] (0,0) grid (3,3); \draw[pathlight] (0,0) -- (3,3);
 \draw[path] (0,0) -- (0,3) -- (3,3);
 \foreach \x/\y in {1/3,2/1,3/2} {\draw[fill,blue] (\x-0.5,\y-0.5) circle(\Rr);}
}
\quad
\tikz[scale=\sf]{
 \draw[step=1,gray!60] (0,0) grid (3,3); \draw[pathlight] (0,0) -- (3,3);
 \draw[path] (0,0) -- (0,3) -- (3,3);
 \draw[path,orange] (0,2) -- (0,3) -- (2,3);
 \foreach \x/\y in {1/3,2/1,3/2} {\draw[fill,blue] (\x-0.5,\y-0.5) circle(\Rr);}
}
\quad
\tikz[scale=\sf]{
 \draw[step=1,gray!60] (0,0) grid (3,3); \draw[pathlight] (0,0) -- (3,3);
 \draw[path] (0,0) -- (0,2) -- (1,2) -- (1,3) -- (3,3);
 \foreach \x/\y in {1/2,2/3,3/1} {\draw[fill,blue] (\x-0.5,\y-0.5) circle(\Rr);}
}
\quad
\tikz[scale=\sf]{
 \draw[step=1,gray!60] (0,0) grid (3,3); \draw[pathlight] (0,0) -- (3,3);
 \draw[path] (0,0) -- (0,2) -- (1,2) -- (1,3) -- (3,3);
 \draw[path,orange] (1,2) -- (1,3) -- (3,3);
 \foreach \x/\y in {1/2,2/3,3/1} {\draw[fill,blue] (\x-0.5,\y-0.5) circle(\Rr);}
}

\parbox{0.43\textwidth}{\small 312 \hfill \underline{31}2 \hfill 231 \hfill 2\underline{31}}

\vspace{10pt}
\tikz[scale=\sf]{
 \draw[step=1,gray!60] (0,0) grid (3,3); \draw[pathlight] (0,0) -- (3,3);
 \draw[path] (0,0) -- (0,2) -- (2,2) -- (2,3) -- (3,3);
 \foreach \x/\y in {1/2,2/1,3/3} {\draw[fill,blue] (\x-0.5,\y-0.5) circle(\Rr);}
}
\quad
\tikz[scale=\sf]{
 \draw[step=1,gray!60] (0,0) grid (3,3); \draw[pathlight] (0,0) -- (3,3);
 \draw[path] (0,0) -- (0,2) -- (2,2) -- (2,3) -- (3,3);
 \draw[path,orange] (0,1) -- (0,2) -- (2,2);
 \foreach \x/\y in {1/2,2/1,3/3} {\draw[fill,blue] (\x-0.5,\y-0.5) circle(\Rr);}
}
\quad
\tikz[scale=\sf]{
 \draw[step=1,gray!60] (0,0) grid (3,3); \draw[pathlight] (0,0) -- (3,3);
 \draw[path] (0,0) -- (0,1) -- (1,1) -- (1,3) -- (3,3);
 \foreach \x/\y in {1/1,2/3,3/2} {\draw[fill,blue] (\x-0.5,\y-0.5) circle(\Rr);}
}
\quad
\tikz[scale=\sf]{
 \draw[step=1,gray!60] (0,0) grid (3,3); \draw[pathlight] (0,0) -- (3,3);
 \draw[path] (0,0) -- (0,1) -- (1,1) -- (1,3) -- (3,3);
 \draw[path,orange] (1,2) -- (1,3) -- (3,3);
 \foreach \x/\y in {1/1,2/3,3/2} {\draw[fill,blue] (\x-0.5,\y-0.5) circle(\Rr);}
}
\quad
\tikz[scale=\sf]{
 \draw[step=1,gray!60] (0,0) grid (3,3); \draw[pathlight] (0,0) -- (3,3);
 \draw[path] (0,0) -- (0,1) -- (1,1) -- (1,2) -- (2,2) -- (2,3) -- (3,3);
 \foreach \x/\y in {1/1,2/2,3/3} {\draw[fill,blue] (\x-0.5,\y-0.5) circle(\Rr);}
}
\parbox{0.56\textwidth}{\small
 213 \hfill \underline{21}3 \hfill 132 \hfill 1\underline{32} \hfill 123}
\end{center}

Finally, we take the complement of $\sigma_P$ (keeping the underlines) and construct the weak-ordering chain $w_P$ associated with $P$ by replacing $ij$ in $\sigma_P^c$ with $x_i<x_j$ and $\underline{ij}$ with $x_i=x_j$. Note that $\sigma_P^c\in S_n(123)$, hence $w_P$ does not contain the stopping condition. Also observe that, while $x_i=x_j$ and $x_j=x_i$ represent the same chain, our map gives the partition blocks in the form: smallest element first, followed by the rest of the elements in decreasing order.

The above paths correspond to the chains:
\begin{gather*}
 x_1<x_3<x_2, \;\; x_1=x_3<x_2, \;\; x_2<x_1<x_3, \;\; x_2<x_1=x_3 \\
 x_2<x_3<x_1, \;\; x_2=x_3<x_1, \;\; x_3<x_1<x_2, \;\; x_3<x_1=x_2, \;\; x_3<x_2<x_1.
\end{gather*}

We now illustrate the inverse by means of an example. Suppose we have the active leaf
\[ w:\quad x_5=x_7<x_2=x_6<x_4<x_1<x_3. \]
To this weak-ordering chain we associate the underlined permutation $\sigma_w = \underline{57}\,\underline{26}\,413$ with complement $\sigma_w^c = \underline{31}\,\underline{62}\,475$. Finally, constructing the Dyck path corresponding to $3162475$ and making orange the \textsf{NEE} steps above underlined numbers, we arrive at 

\smallskip
\begin{center}
\def\rr{0.08}
\def\Rr{0.14}
\tikz[scale=0.45]{
 \draw[step=1,gray!60] (0,0) grid (7,7); \draw[pathlight] (0,0) -- (7,7);
 \draw[path] (0,0) -- (0,3) -- (2,3) -- (2,6) -- (5,6) -- (5,7) -- (7,7);
 \draw[path,orange] (0,2) -- (0,3) -- (2,3);
 \draw[path,orange] (2,5) -- (2,6) -- (4,6);
 \foreach \x/\y in {1/3,2/1,3/6,4/2,5/4,6/7,7/5} {\draw[fill,blue] (\x-0.5,\y-0.5) circle(\Rr);}
}
\end{center}
which is a colored Dyck path of the desired form.

Note that a permutation $\sigma$ on $[n]$ with $d$ descents gives rise to $2^d$ chains. Since the reverse $\sigma^r$ is a 123-avoiding permutation with $d$ ascents (hence $n-1-d$ descents), we conclude that the number of active leaves is given by
\begin{equation}\label{eq:321actives}
 a_n = \sum_{d=0}^{n-1} 2^{d} e_{n,n-1-d} = \sum_{d=0}^{n-1} 2^{n-1-d} e_{n,d},
\end{equation}
where $e_{n,d}=\abs{S_n^d(123)}$. Thus $A(x) = \tfrac12 E(2x,\tfrac12)$ with $E(x,y)$ from \eqref{eq:E(x,y)}, which simplifies to the claimed expression.
\end{proof}

\begin{theorem} \label{thm:x<=y<=z}
If $w_n$ is the number of weak-ordering chains in $\W(n)$, subject to the stopping condition $x_{i_1}\le x_{i_2}\le x_{i_3}$ with $i_1 < i_2 < i_3$, then
\begin{equation*}
  w_n = \sum_{d=0}^{n-1}2^{n-1-d} e_{n,d} + \sum_{j=3}^n\sum_{d=2}^{j-1}2^{j-1-d} g_{j,d}.
\end{equation*}
The sequence $(w_n)_{n\in\mathbb{N}}$ starts with $1, 3, 13, 59, 269,1227, 5613, 25771, 118765,\dots,$ and the generating function $W(x) = \sum\limits_{n=1}^\infty w_n x^n$ satisfies
\[ 1+ W(x) = \frac{x}{1-x^2} + \frac{1-2x-2x^{2}}{(1-x^2)\sqrt{1-4x-4x^2}}. \]
\end{theorem}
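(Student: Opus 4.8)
The plan is to follow the same two-part strategy as in the proof of Theorem~\ref{thm:x<y<z}, separating active from inactive leaves and assembling their generating functions, but applying the complement correspondence of Proposition~\ref{prop:321active} throughout, so that the roles of ascents and descents are interchanged.

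First I would record the active count. By Proposition~\ref{prop:321active} and \eqref{eq:321actives}, the number of active leaves is $a_n = \sum_{d=0}^{n-1} 2^{n-1-d} e_{n,d}$, so that $A(x) = \tfrac12 E(2x,\tfrac12)$. Substituting \eqref{eq:E(x,y)} and simplifying recovers the closed form $1+A(x) = C(x(1+x)) = \frac{1-\sqrt{1-4x-4x^2}}{2x(1+x)}$ already established in that proposition, and I would keep $A(x)$ in this form for the final combination.

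Next I would count the leaves that become inactive at level $j$. A chain becomes inactive exactly when inserting the new, largest-indexed variable $x_j$ first completes a weakly increasing triple $x_{i_1}\le x_{i_2}\le x_j$ with $i_1<i_2<j$. Under the complement correspondence, passing to the complemented underlined permutation turns the forbidden weakly increasing triple into a weakly decreasing one and sends the newly inserted largest index to the smallest value; thus ``inactive at level $j$'' corresponds to a freshly created $321$-pattern that is removable by deleting the smallest value, and this family is equinumerous with $\G^d_j(123)$ via the complement bijection, with matching descent counts. The only change from the strict case is the weight: in the $\le$-setting the free choice between $<$ and $=$ sits at the ascents rather than the descents (the same swap that turned $2^d$ into $2^{n-1-d}$ for the active leaves), so each permutation with $d$ descents now contributes $2^{j-1-d}$ chains. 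Hence $\Delta_j = \sum_d 2^{j-1-d} g_{j,d}$, and passing to generating functions gives $\sum_{j\ge 3}\Delta_j x^j = \tfrac12 G(2x,\tfrac12)$, in exact parallel with $A(x)=\tfrac12 E(2x,\tfrac12)$.

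Finally, since $b_n = \sum_{j=3}^n \Delta_j$, I would obtain $B(x) = \frac{1}{1-x}\cdot\tfrac12 G(2x,\tfrac12)$, evaluate $G(2x,\tfrac12)$ from the closed form in Proposition~\ref{prop:gnd}, and combine $W(x)=A(x)+B(x)$; routine algebra over the common radical $\sqrt{1-4x-4x^2}$ should collapse the sum to the stated expression for $1+W(x)$. The main obstacle I anticipate is the middle step: unlike the strict case, an inactive $\le$-chain need not project to a permutation containing a plain $123$ (the forbidden triple can be realized through equalities), so inactivity cannot be read off from the underlying permutation alone. The argument must therefore be carried out at the level of the complemented underlined permutations---equivalently the colored Dyck paths of Proposition~\ref{prop:321active}---verifying that the insertion of $x_j$ first creating a weakly increasing triple is precisely the insertion first creating a $321$-pattern in the complemented picture, and that the $2^{j-1-d}$ weighting correctly enumerates the admissible underlinings under the convention used there. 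I would check the bijection and the weighting against the base values $\Delta_3=4$ and $\Delta_4=24$ before trusting the general count.
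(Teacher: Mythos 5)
Your proposal is correct and follows the paper's own proof essentially step for step: the active count comes from Proposition~\ref{prop:321active} (so $A(x)=\tfrac12 E(2x,\tfrac12)$), the chains becoming inactive at level $j$ are matched with the sets $\G^{d}_{j}(123)$ weighted by $2$ to the number of ascents, and the assembly is $B(x)=\tfrac12 G(2x,\tfrac12)/(1-x)$ followed by $W=A+B$.

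Two small remarks. First, your range-free sum $\Delta_j=\sum_d 2^{j-1-d}g_{j,d}$, read over the support $0\le d\le j-3$ of $g_{j,\cdot}$, is the correct count, and your planned checks confirm it: $\Delta_3=2^2g_{3,0}=4$ and $\Delta_4=2^2g_{4,1}=24$. The limits $\sum_{d=2}^{j-1}$ printed in the theorem statement (and in the paper's proof) are a slip: the ranges of the two equivalent forms $\sum 2^{d}g_{j,j-1-d}$ and $\sum 2^{j-1-d}g_{j,d}$ got interchanged, and taken literally they would give $\Delta_3=g_{3,2}=0$ and $\Delta_4=2g_{4,2}+g_{4,3}=0$, contradicting $w_3=13$, $w_4=59$. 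The identity $\sum_{j\ge 3}\Delta_j x^j=\tfrac12 G(2x,\tfrac12)$ automatically sums over the correct support, which is why the closed form for $W(x)$ and the sequence are nevertheless right; so do carry out the numerical checks you propose. Second, a bookkeeping caution: the complement does \emph{not} give ``matching descent counts'' (it exchanges descents and ascents, and deleting the largest entry of $\sigma$ corresponds to deleting the \emph{smallest} entry of $\sigma^c$); the clean bijection onto $\G^{j-1-d}_{j}(123)$ from the $d$-descent, $321$-containing permutations is the reverse map, which commutes with removing the largest entry. Your final weighting—$2^{j-1-d}$ attached to the $d$-descent element of $\G^{d}_{j}(123)$—is the right one regardless, since with blocks written so that equalities sit at ascents, each such permutation underlies exactly $2^{j-1-d}$ chains.
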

\begin{proof}
We already know that the number of active chains is given by \eqref{eq:321actives}, so we only need to focus on the inactive ones. Note that inactive chains at level $j$ come from permutations in $S_j$ that contain the pattern 321 and such that their reduced permutation (obtained by removing $j$) belongs to $S_{j-1}(321)$. The set of such permutations having exactly $d$ descents is in bijection with $\G^{j-1-d}_{j}(123)$ (as defined in \eqref{eq:just123}), and therefore, each such permutation induces $2^d g_{j,j-1-d}$ inactive weak-ordering chains. Hence
\[ \Delta_j = b_j-b_{j-1} =  \sum_{d=0}^{j-3} 2^{d} g_{j,j-1-d} = \sum_{d=2}^{j-1} 2^{j-1-d} g_{j,d}, \]
which implies $b_n = \sum\limits_{j=3}^{n} \Delta_j = \sum\limits_{j=3}^{n} \sum\limits_{d=2}^{j-1} 2^{j-1-d} g_{j,d}$. We can then use Proposition~\ref{prop:gnd} to write its generating function $B(x)$ as
\[ B(x) = \frac{\frac12 G(2x,\tfrac12)}{1-x} = \frac{1-4x - (1-2x) \sqrt{1-4x-4x^2}}{2x(1-x)\sqrt{1-4x-4x^2}}. \]
Combined with Proposition~\ref{prop:321active}, this gives a closed form for $1+W(x)=1+A(x)+B(x)$.
\end{proof}

\section[]{Stopping condition $\boldsymbol{x_{i_1}\!\le x_{i_2}\!<x_{i_3}}$ with $\boldsymbol{i_1\!< i_2\!< i_3}$}
\label{sec:213condition}

\begin{proposition} \label{prop:213active}
The set of active leaves in $\W(n)$, subject to the stopping condition $x_{i_1}\le x_{i_2}<x_{i_3}$ with $i_1 < i_2 < i_3$, is in one-to-one correspondence with the set of Dyck paths of semilength $n$ in which valleys may be marked. They are counted by the little Schr\"oder numbers $1$, $3$, $11$, $45,\dots,$ \cite[A001003]{oeis}. 
\end{proposition}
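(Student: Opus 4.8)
The plan is to mirror the projection-to-permutations strategy of Section~\ref{sec:123condition}, now tracking which descents may still carry an equality under the stricter stopping condition. As before, I would write each active chain in value order and record its underlying permutation $\sigma$, adopting the convention that the entries of each block are listed in decreasing order, so that every tie ($=$) sits at a descent of $\sigma$. A strictly increasing triple $x_{i_1}<x_{i_2}<x_{i_3}$ with $i_1<i_2<i_3$ forces a $123$ pattern in $\sigma$; hence, exactly as in \eqref{eq:123active}, an active chain must project to some $\sigma\in S_n(123)$. The genuinely new ingredient is the mixed relation $x_{i_1}\le x_{i_2}<x_{i_3}$: a tie merging the two entries $\sigma(p)>\sigma(p+1)$ of a descent records the equality $x_{\sigma(p+1)}=x_{\sigma(p)}$ with $\sigma(p+1)<\sigma(p)$, and this is forbidden precisely when some label $i_3>\sigma(p)$ lies in a strictly higher block, that is, to the right of position $p$ in $\sigma$.

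First I would isolate the descents at which an equality is still allowed. By the observation above, a descent at position $p$ may carry a tie if and only if $\sigma(p)$ is a right-to-left maximum of $\sigma$ (no larger entry to its right); call such a descent \emph{free}. A short local analysis along each maximal descent run then shows that the free descents may be toggled independently: merging any consecutive block of free descents never produces a larger label in a higher block, so no forbidden configuration $x_{i_1}\le x_{i_2}<x_{i_3}$ arises. Consequently, the number of active chains projecting to a fixed $\sigma$ equals $2^{f(\sigma)}$, where $f(\sigma)$ is the number of free descents, and therefore $a_n=\sum_{\sigma\in S_n(123)}2^{f(\sigma)}$.

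Next I would transport this count to Dyck paths. Under the same version of Krattenthaler's bijection invoked in Section~\ref{sec:123condition}, the descents of a $123$-avoiding permutation split into the valleys and the triple down-steps of the associated Dyck path; the crux is that the free descents (the right-to-left maxima) are exactly the valleys, while the non-free descents are the triple down-steps, which must stay strict. This identifies active chains with Dyck paths of semilength $n$ in which each valley is independently marked or unmarked, giving $a_n=\sum_{P}2^{v(P)}$ over Dyck paths $P$ with $v(P)$ valleys. Recognizing this sum as the little Schröder numbers \cite[A001003]{oeis}, for instance by specializing the valley generating function $\sum_P x^n t^{v(P)}$ at $t=2$, completes the argument.

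I expect the main obstacle to be the identification of free descents with valleys under the chosen bijection, together with the independence claim for multi-element blocks: one must verify that within a descent run the right-to-left-maximum property is carried by exactly the valley positions, and that simultaneously tying a consecutive run of free descents cannot create the pattern $x_{i_1}\le x_{i_2}<x_{i_3}$. Once this local bookkeeping is in place the enumeration is immediate. Should the bijective accounting prove delicate, an alternative is to mimic the explicit dot-placement construction in the proof of Proposition~\ref{prop:321active}, now marking only the valleys rather than the subpaths $\mathsf{UDD}$, and to read off the little Schröder count directly from the resulting colored paths.
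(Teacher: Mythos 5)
Your argument is correct in substance, and its key observation is the same as the paper's, but the two proofs are organized in genuinely different directions. Your characterization of the admissible ties --- a tie at the descent $\sigma(p)>\sigma(p+1)$ is allowed if and only if $\sigma(p)$ is a right-to-left maximum --- is exactly equivalent to the paper's condition that the underlined permutation avoid the pattern $\underline{21}3$: a tie at a non-free descent creates such a pattern with $i_3$ any larger entry to the right of $\sigma(p)$ (such an entry necessarily lies in a strictly later block, since blocks are written in decreasing order), and your independence claim is sound because the second-smallest entry of any block obtained by merging consecutive free descents is itself a free descent top, hence a right-to-left maximum, so no entry in a later block can exceed it. This correctly yields $a_n=\sum_{\sigma\in S_n(123)}2^{f(\sigma)}$. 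Where you diverge from the paper is in how the Dyck paths enter: the paper builds the bijection in the opposite direction, starting from a Dyck path with marked valleys, applying the dot-placement map of Proposition~\ref{prop:321active} to obtain a $321$-avoiding permutation, underlining the adjacent entries flanking each marked valley, and then taking the \emph{reverse} to land on an underlined $123$-avoiding permutation with no $\underline{21}3$ pattern, i.e., an active chain. That construction never needs any statistic-preservation property of Krattenthaler's bijection, whereas your route hinges on one.

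That hinge is also the one step you assert but do not prove: that under the bijection invoked in Section~\ref{sec:123condition}, free descents correspond to valleys and non-free descents to triple down-steps. The claim is true (indeed the distribution of $f$ over $S_n(123)$ is the Narayana distribution, matching valleys), but without it your argument stops at the formula $a_n=\sum_{\sigma}2^{f(\sigma)}$ and does not yet deliver the correspondence with marked-valley Dyck paths that the proposition asserts, nor the Schr\"oder count. The clean repair is your own stated fallback, which is essentially the paper's proof: construct the path explicitly from the permutation (equivalently from its reverse, which is $321$-avoiding) and verify that the descents whose tops are right-to-left maxima sit precisely at the valleys. Either supply that verification or adopt the explicit construction; as written, the proof is one lemma short of complete.
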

\begin{proof}
We will use the same map between 321-avoiding permutations and Dyck paths used in Proposition~\ref{prop:321active} to provide a bijection between active weak-ordering chains of length $n$ and Dyck paths from $(0, 0)$ to $(n, n)$ in which valleys may be marked. It is easy to check that these paths are counted by the little Schr\"oder numbers.

For every possibly marked Dyck path $P$, we plot the corresponding 321-avoiding permutation $\sigma_P$ and underline adjacent elements of $\sigma_P$ if there is a marked valley between their plots. For example, for $n=3$ there are 11 such elements:

\smallskip
\begin{center}
\def\sf{0.5}
\def\rr{0.13}
\def\Rr{0.14}

\tikz[scale=\sf]{
 \draw[step=1,gray!60] (0,0) grid (3,3); \draw[pathlight] (0,0) -- (3,3);
 \draw[path] (0,0) -- (0,3) -- (3,3);
 \foreach \x/\y in {1/3,2/1,3/2} {\draw[fill,blue] (\x-0.5,\y-0.5) circle(\Rr);}
}
\quad
\tikz[scale=\sf]{
 \draw[step=1,gray!60] (0,0) grid (3,3); \draw[pathlight] (0,0) -- (3,3);
 \draw[path] (0,0) -- (0,2) -- (1,2) -- (1,3) -- (3,3);
 \foreach \x/\y in {1/2,2/3,3/1} {\draw[fill,blue] (\x-0.5,\y-0.5) circle(\Rr);}
}
\quad
\tikz[scale=\sf]{
 \draw[step=1,gray!60] (0,0) grid (3,3); \draw[pathlight] (0,0) -- (3,3);
 \draw[path] (0,0) -- (0,2) -- (1,2) -- (1,3) -- (3,3);
 \draw[fill,orange] (1,2) circle(\rr);
 \foreach \x/\y in {1/2,2/3,3/1} {\draw[fill,blue] (\x-0.5,\y-0.5) circle(\Rr);}
}
\quad
\tikz[scale=\sf]{
 \draw[step=1,gray!60] (0,0) grid (3,3); \draw[pathlight] (0,0) -- (3,3);
 \draw[path] (0,0) -- (0,2) -- (2,2) -- (2,3) -- (3,3);
 \foreach \x/\y in {1/2,2/1,3/3} {\draw[fill,blue] (\x-0.5,\y-0.5) circle(\Rr);}
}
\quad
\tikz[scale=\sf]{
 \draw[step=1,gray!60] (0,0) grid (3,3); \draw[pathlight] (0,0) -- (3,3);
 \draw[path] (0,0) -- (0,2) -- (2,2) -- (2,3) -- (3,3);
 \draw[fill,orange] (2,2) circle(\rr);
 \foreach \x/\y in {1/2,2/1,3/3} {\draw[fill,blue] (\x-0.5,\y-0.5) circle(\Rr);}
}
\parbox{0.58\textwidth}{\small
 312 \hfill 132 \hfill \underline{13}2 \hfill 213 \hfill 2\underline{13}}
 
\vspace{8pt}
\tikz[scale=\sf]{
 \draw[step=1,gray!60] (0,0) grid (3,3); \draw[pathlight] (0,0) -- (3,3);
 \draw[path] (0,0) -- (0,1) -- (1,1) -- (1,3) -- (3,3);
 \foreach \x/\y in {1/1,2/3,3/2} {\draw[fill,blue] (\x-0.5,\y-0.5) circle(\Rr);}
}
\quad
\tikz[scale=\sf]{
 \draw[step=1,gray!60] (0,0) grid (3,3); \draw[pathlight] (0,0) -- (3,3);
 \draw[path] (0,0) -- (0,1) -- (1,1) -- (1,3) -- (3,3);
 \draw[fill,orange] (1,1) circle(\rr);
 \foreach \x/\y in {1/1,2/3,3/2} {\draw[fill,blue] (\x-0.5,\y-0.5) circle(\Rr);}
}
\quad
\tikz[scale=\sf]{
 \draw[step=1,gray!60] (0,0) grid (3,3); \draw[pathlight] (0,0) -- (3,3);
 \draw[path] (0,0) -- (0,1) -- (1,1) -- (1,2) -- (2,2) -- (2,3) -- (3,3);
 \foreach \x/\y in {1/1,2/2,3/3} {\draw[fill,blue] (\x-0.5,\y-0.5) circle(\Rr);}
}
\quad
\tikz[scale=\sf]{
 \draw[step=1,gray!60] (0,0) grid (3,3); \draw[pathlight] (0,0) -- (3,3);
 \draw[path] (0,0) -- (0,1) -- (1,1) -- (1,2) -- (2,2) -- (2,3) -- (3,3);
 \draw[fill,orange] (1,1) circle(\rr);
 \foreach \x/\y in {1/1,2/2,3/3} {\draw[fill,blue] (\x-0.5,\y-0.5) circle(\Rr);}
}
\quad
\tikz[scale=\sf]{
 \draw[step=1,gray!60] (0,0) grid (3,3); \draw[pathlight] (0,0) -- (3,3);
 \draw[path] (0,0) -- (0,1) -- (1,1) -- (1,2) -- (2,2) -- (2,3) -- (3,3);
 \draw[fill,orange] (2,2) circle(\rr);
 \foreach \x/\y in {1/1,2/2,3/3} {\draw[fill,blue] (\x-0.5,\y-0.5) circle(\Rr);}
}
\quad
\tikz[scale=\sf]{
 \draw[step=1,gray!60] (0,0) grid (3,3); \draw[pathlight] (0,0) -- (3,3);
 \draw[path] (0,0) -- (0,1) -- (1,1) -- (1,2) -- (2,2) -- (2,3) -- (3,3);
 \draw[fill,orange] (1,1) circle(\rr); \draw[fill,orange] (2,2) circle(\rr);
 \foreach \x/\y in {1/1,2/2,3/3} {\draw[fill,blue] (\x-0.5,\y-0.5) circle(\Rr);}
}
\parbox{0.72\textwidth}{\small
 132 \hfill \underline{13}2 \hfill 123 \hfill \underline{12}3 \hfill 1\underline{23} \hfill \underline{123}}
\end{center}

Note that, by construction, a 3\underline{12} pattern can never occur. We then take the reverse of $\sigma_P$ (keeping the underlines) and construct the weak-ordering chain $w_P$ associated with $P$ by replacing $ij$ in $\sigma_P^r$ with $x_i<x_j$ and $\underline{ij}$ with $x_i=x_j$. The resulting underlined permutation $\sigma_P^r$ avoids 123 and does not contain a \underline{21}3 pattern. Hence, the chain $w_P$ does not contain the stopping condition. For example, the above paths correspond to the active chains:
\begin{gather*}
 x_2<x_1<x_3, \;\;  x_2<x_3<x_1, \;\; x_2<x_3=x_1, \;\; x_3<x_1<x_2, \;\; x_3=x_1<x_2, \\
 x_2<x_3<x_1, \;\, x_2<x_3=x_1, \;\, x_3<x_2<x_1, \;\, x_3<x_2=x_1, \;\, x_3=x_2<x_1, \; x_3=x_2=x_1.
\end{gather*}

The reverse map is straightforward. From an active chain, read the underlined permutation made from its indices (with the convention that in a block of equal elements, indices are sorted in decreasing order). Then take the reverse permutation (necessarily 321-avoiding) and draw the associated Dyck paths, marking the valleys that correspond to underlined adjacent entries in the permutation.
\end{proof}

Observe that, since the number of Dyck paths with $v$ valleys is counted by the Narayana numbers $N_{n,v}  = \frac{1}{n} \binom{n}{v}\binom{n}{v+1}$, the number of active leaves is given by
\[ a_n = \sum_{v=0}^{n-1} 2^v N_{n,v}  = \sum_{v=0}^{n-1} \,\frac{2^v}{n} \binom{n}{v}\binom{n}{v+1}. \]

\clearpage
For $n>3$ and $1\le d\le n-2$, define
\begin{equation*}
 \G^{d}_{n}(213) = \{\sigma\in S_n \,|\, \sigma \text{ has a $213$ pattern, $d$ descents, and } \sigma'\in S_{n-1}(213)\},
\end{equation*}
where $\sigma'\in S_{n-1}$ is the reduced permutation obtained by removing $n$. In addition, we define $\G^{d}_{1}(213)=\G^{d}_{2}(213)=\varnothing$ and $\G^{1}_{3}(213)=\{213\}$. 

\begin{proposition} \label{prop:lnd}
If $\ell_{n,d}=\abs{\G^{d}_{n}(213)}$, then $\ell_{3,1}=1$, and for $n>3$ and $1\le d\le n-2$,
\begin{equation*}
 \ell_{n,d} = (d+1) N_{n-1,d} + (n-d) N_{n-1,d-1} - N_{n,d}.
\end{equation*}
Moreover, $L(x,y) = x^3 + \sum\limits_{n=4}^\infty\sum\limits_{d=1}^{n-2} \ell_{n,d\,} x^n y^d$ has the closed form
\begin{equation*}
 L(x,y) = \frac{x(y+1) - 2(y^2 - y)x^4 - 1}{2xy} + \frac{(1-xy)^2+x^2-2x}{2xy\sqrt{1-2x(y+1)+x^2(y-1)^2}}.
\end{equation*}
\end{proposition}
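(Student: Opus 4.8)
The plan is to mirror the proof of Proposition~\ref{prop:gnd} essentially line by line, with $S_n(123)$ replaced by $S_n(213)$ and $e_{n,d}$ replaced by the Narayana numbers. The one extra ingredient I need up front is the classical fact that descents are Narayana-distributed over $213$-avoiding permutations, i.e.\ $\abs{S_n^d(213)} = N_{n,d}$. I would either cite this or read it off from a Dyck-path bijection in the spirit of Proposition~\ref{prop:213active}, and in any case confirm it directly on the rows $n=1,2,3,4$ (for $n=3$ it gives $(1,3,1)$, matching $N_{3,d}$).

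For the recurrence itself, every $\sigma\in\G^d_n(213)$ arises by inserting $n$ into its reduction $\sigma'$, which necessarily lies in $S_{n-1}(213)$ and has either $d$ or $d-1$ descents. Since deleting the largest entry can never create a pattern, and since the effect of inserting the largest entry on the descent count depends only on the insertion site and not on the pattern being tracked, the bookkeeping is identical to Proposition~\ref{prop:gnd}: each $\sigma'\in S^d_{n-1}(213)$ admits $d+1$ insertions keeping $d$ descents, and each $\sigma'\in S^{d-1}_{n-1}(213)$ admits $n-d$ insertions that create the $d$-th descent. This produces $(d+1)N_{n-1,d}+(n-d)N_{n-1,d-1}$ permutations of size $n$ with $d$ descents whose reduction avoids $213$; subtracting the $N_{n,d}$ of them that themselves avoid $213$ (a subset, since $213$-avoidance is inherited by reductions) leaves exactly those that contain $213$, namely $\ell_{n,d}$. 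The base value $\ell_{3,1}=1$ (the single permutation $213$) is immediate, and the formula returns $1$ there as well.

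For the generating function I would convert the recurrence termwise into the Narayana generating function $N(x,y)=\sum N_{n,d}x^ny^d$ and its partials. The shift-and-weight identities give $\sum(d{+}1)N_{n-1,d}\,x^ny^d = x(N+yN_y)$, $\sum(n{-}d)N_{n-1,d-1}\,x^ny^d = xy(xN_x-yN_y)$, and $\sum N_{n,d}\,x^ny^d = N$; after accounting for the lone surviving boundary term at $(n,d)=(1,0)$, this collapses to
\[ L(x,y) = x + (x-1)N + x^2yN_x + (xy-xy^2)N_y, \]
which is the exact analogue of the formula for $G$ in Proposition~\ref{prop:gnd}.

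Finally I would substitute the standard closed form $N(x,y)=\dfrac{1-x(1+y)-\sqrt{1-2x(1+y)+x^2(1-y)^2}}{2xy}$, together with $N_x$ and $N_y$, and simplify over the common denominator $2xy\sqrt{1-2x(1+y)+x^2(1-y)^2}$. The main obstacle is precisely this last step: differentiating the radical, clearing denominators, and sorting the result into its rational and irrational parts to reach the stated closed form. I expect the delicate point to be tracking the low-order and boundary corrections carefully enough that the additive polynomial term and the treatment of the $n=3$ base term come out exactly right, since that small-$n$ bookkeeping is where a sign or off-by-one slip is most likely to hide.
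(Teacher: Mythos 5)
Your combinatorial derivation of the recurrence is exactly the paper's argument (insert $n$ into $\sigma'\in S^{d}_{n-1}(213)\cup S^{d-1}_{n-1}(213)$, then subtract the $N_{n,d}$ permutations that still avoid $213$), and your termwise conversion into Narayana generating functions is computed correctly as far as it goes. The problem is the identity you commit to at the end of that step,
\[
 L(x,y) \;=\; x + (x-1)N + x^2yN_x + (xy-xy^2)N_y,
\]
which is \emph{not} correct for the $L$ defined in the proposition; this is precisely the ``$n=3$ base term'' pitfall you flagged, and it bites. The termwise sum
$\sum_{n\ge 3}\sum_{d}\bigl[(d+1)N_{n-1,d}+(n-d)N_{n-1,d-1}-N_{n,d}\bigr]x^ny^d$
does equal $x + (x-1)N + x^2yN_x + (xy-xy^2)N_y$ (the recurrence is even valid at $(n,d)=(3,1)$, and the spurious rows $n=2$ and $d\in\{0,n-1\}$ vanish), but its $n=3$ contribution is $\ell_{3,1}x^3y^1=x^3y$, since the lone permutation $213$ has one descent. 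The proposition, however, defines $L$ with the constant base term $x^3$, i.e.\ $L(x,y)=x^3+\sum_{n\ge 4}\sum_{d=1}^{n-2}\ell_{n,d}x^ny^d$. Hence the correct identity is
\[
 L(x,y) \;=\; x + (1-y)x^3 + (x-1)N + x^2yN_x + (xy-xy^2)N_y,
\]
which is what the paper's proof states.

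Concretely, if you substitute the closed form of $N$ into your version of the identity, you obtain the stated expression with first term $\frac{x(y+1)-1}{2xy}$ instead of $\frac{x(y+1)-2(y^2-y)x^4-1}{2xy}$; the missing piece is exactly $-\frac{2(y^2-y)x^4}{2xy}=(1-y)x^3$, so your proof would end by contradicting the claimed closed form rather than establishing it. Note also that the paper's seemingly odd normalization of $L$ is not cosmetic and cannot be waved away as a typo: Theorem~\ref{thm:x<=y<z} later uses $B(x)=\frac{x^3+L(x,2)}{1-x}$, which requires $L$'s base term to be $x^3$ so that $x^3+L(x,2)$ contributes $2x^3=2^{1}\ell_{3,1}x^3$ for the level-$3$ inactive chains. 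The fix to your write-up is one line: record the $n=3$ term as $x^3y$ in the termwise sum, then add $x^3-x^3y=(1-y)x^3$ to convert to the $L$ of the statement before substituting the closed form of $N$.
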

\begin{proof}
The formula for $\ell_{n,d}$ follows from an argument similar to the one made for the proof of \eqref{eq:gnd}. There are $(d+1)N_{n-1,d}$ permutations of size $n$ with $d$ descents that can be created by inserting $n$ (at a descent or at the end) into the $N_{n-1,d}$ elements of $S^d_{n-1}(213)$. In addition, there are $(n-d)N_{n-1,d-1}$ such permutations that can be generated from $S^{d-1}_{n-1}(213)$. To get the total number of elements in $\G^{d}_{n}(213)$, we combine the above permutations and remove the ones that avoid 213 (counted by $N_{n,d}$).

As a consequence, if $N(x,y)$ is the generating function for the Narayana numbers, then
\begin{equation*}
 L(x,y) = x + (1-y)x^3 + (x-1)N(x,y)+x^2yN_x(x,y)+(xy-xy^2)N_y(x,y).
\end{equation*}
Finally, using the known formula
\[ N(x,y) = \frac{1-x(y+1)-\sqrt{1-2x(y+1)+x^2(y-1)^2}}{2xy}, \]
one derives the closed form of $L(x,y)$.
\end{proof}

\begin{theorem} \label{thm:x<=y<z}
If $w_n$ is the number of weak-ordering chains in $\W(n)$, subject to the stopping condition $x_{i_1}\le x_{i_2} < x_{i_3}$ with $i_1 < i_2 < i_3$, then
\begin{equation*}
  w_n = \sum_{d=0}^{n-1}2^{d} N_{n,d} + \sum_{j=3}^n\sum_{d=1}^{j-2}2^{d} \ell_{j,d}.
\end{equation*}
The sequence $(w_n)_{n\in\mathbb{N}}$ starts with $1$, $3$, $13$, $65$, $341$, $1827$, $9913$, $54273$, $299209$, $1658723,\dots,$ and the function $W(x) = \sum\limits_{n=1}^\infty w_n x^n$ satisfies
\[ W(x) = \frac{(1-x)^2 - (1-3x)\sqrt{1- 6x + x^2}}{4(1-x)\sqrt{1- 6x + x^2}}. \]
\end{theorem}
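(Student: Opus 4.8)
The plan is to reuse the active/inactive decomposition that drove the proofs of Theorems~\ref{thm:x<y<z} and~\ref{thm:x<=y<=z}: write $w_n = a_n + b_n$ and then assemble $W(x) = A(x) + B(x)$ from the two generating functions. The active part is already available: the discussion surrounding Proposition~\ref{prop:213active} identifies the active leaves at level $n$ with marked Dyck paths and records $a_n = \sum_{d=0}^{n-1} 2^d N_{n,d}$, which is precisely the first sum in the statement. Evaluating the Narayana generating function $N(x,y)$ from Proposition~\ref{prop:lnd} at $y=2$ then gives the closed form $A(x) = N(x,2) = \frac{1-3x-\sqrt{1-6x+x^2}}{4x}$.

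For the inactive part I would count the chains that become inactive exactly at level $j$, i.e.\ those for which inserting $x_j$ first creates the forbidden configuration $x_{i_1}\le x_{i_2}<x_{i_3}$. Passing to the permutation side via the correspondence of Proposition~\ref{prop:213active}, such a chain should be governed by a permutation $\sigma\in S_j$ that contains a $213$ pattern while its reduction $\sigma'$ (delete the value $j$) still avoids $213$; that is, $\sigma\in\G^{d}_{j}(213)$. Reading each of the $d$ descents of such a $\sigma$ as either $<$ or $=$ then produces $2^d$ chains per permutation, so that
\[ \Delta_j = \sum_{d=1}^{j-2} 2^d \ell_{j,d}, \qquad b_n = \sum_{j=3}^{n}\Delta_j = \sum_{j=3}^{n}\sum_{d=1}^{j-2} 2^d \ell_{j,d}. \]
This is exactly the second sum, and $w_n = a_n + b_n$ follows.

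I expect the genuine obstacle to be this permutation bookkeeping for the inactive leaves. In the purely strict setting of Theorem~\ref{thm:x<y<z} the stopping pattern depends only on the underlying permutation, so ``all $2^d$ chains of a $123$-containing permutation are inactive'' is immediate; here the $\le$ makes inactivity depend on where the equalities sit, and two chains projecting to the same permutation can split between active and inactive. I would therefore have to extend the bijection of Proposition~\ref{prop:213active} and verify that, after the reversal/relabeling built into that correspondence, ``first becoming inactive at level $j$'' matches exactly ``$\sigma$ contains $213$ but $\sigma'$ avoids it,'' and that the resolution of the $d$ descents contributes precisely the factor $2^d$. Since the naive identification through the index-permutation $\sigma_\pi$ already fails at $j=3$ (the two chains $x_1<x_2<x_3$ and $x_1=x_2<x_3$ that first turn inactive do not both sit inside $\G^{1}_{3}(213)=\{213\}$ under that reading), the correct correspondence must couple the permutation with its marking, and checking it against the small cases $j=3,4$ is what makes the step honest.

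With the count in place the remainder is generating-function algebra. The factor $1/(1-x)$ coming from $b_n = \sum_{j\le n}\Delta_j$ gives $B(x) = L(x,2)/(1-x)$, and Proposition~\ref{prop:lnd} supplies $L(x,2)$ as a rational expression over $\sqrt{1-6x+x^2}$. Forming $W(x) = A(x) + B(x)$ and clearing the common radical should collapse to the claimed closed form; I would confirm that the expansion begins $x + 3x^2 + 13x^3 + 65x^4 + \cdots$ to rule out sign slips in the simplification.
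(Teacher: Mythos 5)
Your overall architecture matches the paper's proof exactly: $w_n = a_n + b_n$, with $A(x) = N(x,2)$ coming from Proposition~\ref{prop:213active}, the claim $\Delta_j = \sum_{d=1}^{j-2}2^d\ell_{j,d}$, and $B(x)$ assembled with the $1/(1-x)$ factor. You have also correctly diagnosed the real difficulty: inactivity under $x_{i_1}\le x_{i_2}<x_{i_3}$ is not a property of the underlying permutation alone, so the argument of Theorem~\ref{thm:x<y<z} does not transfer, and the naive reading via the index permutation fails already at $j=3$. But diagnosing the difficulty is where your proposal stops. You never establish the equality $\Delta_j = \sum_{d=1}^{j-2}2^d\ell_{j,d}$; ``checking it against the small cases $j=3,4$'' is not a proof, and this equality is the entire content of the theorem beyond Propositions~\ref{prop:213active} and~\ref{prop:lnd}. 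The paper spends most of its proof constructing exactly the missing bijection $\phi$: for a chain $w$ that becomes inactive at level $n$, with underlined index permutation $\sigma_w$, one maps the reduction $\sigma_w'$ (which avoids both $123$ and $\underline{21}3$) to the unique $213$-avoiding permutation $\tau_w'$ having the same right-to-left maxima, keeping the underlines in place, and then reinserts $n$ at its original position; a separate adjustment ($\sigma_0(n-1)n* \mapsto (n-1)\sigma_0 n*$) handles the case where $n-1$ and $n$ are adjacent, and one must argue invertibility. Nothing playing this role (nor any alternative argument) appears in your proposal, so the count of inactive leaves is asserted rather than proved.

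A second, smaller point: your formula $B(x) = L(x,2)/(1-x)$ is wrong as written. In Proposition~\ref{prop:lnd} the $n=3$ term of $L(x,y)$ is $x^3$, not $x^3y$, so $L(x,2)$ contributes only $1$ to the coefficient of $x^3$, whereas $\Delta_3 = 2\ell_{3,1} = 2$. The paper accordingly takes $B(x) = \bigl(x^3 + L(x,2)\bigr)/(1-x)$; with your version the series would begin $x + 3x^2 + 12x^3 + \cdots$ instead of $x+3x^2+13x^3+\cdots$. The numerical check you propose at the end would have flagged the discrepancy, but the correction must actually be made, since otherwise $A(x)+B(x)$ does not simplify to the claimed closed form for $W(x)$.
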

\begin{proof}
The first summation in the claimed formula for $w_n$ represents the number of active weak-ordering chains (Proposition~\ref{prop:213active}). Thus their generating function can be written as
\[ A(x) = N(x,2) = \frac{1-3x-\sqrt{1-6x+x^2}}{4x}. \]
In order to derive a formula for the number $\Delta_j$ of active chains that become inactive at level $j$, we will provide a bijection $\phi$ between these chains and the set of permutations in $\bigcup_{d=1}^{j-2}\G^{d}_{j}(213)$ where descents may be underlined. That implies $\Delta_1=\Delta_2=0$, and for $j\ge 3$, $\Delta_j = \sum\limits_{d=1}^{j-2}2^d \ell_{j,d}$. As a consequence, the total number of inactive chains will be given by
\[ b_1=b_2=0, \text{ and } b_n = \sum_{j=3}^{n} \Delta_j = \sum_{j=3}^n\sum_{d=1}^{j-2}2^{d} \ell_{j,d} \text{ for } n\ge 3, \]
with generating function
\[ B(x) = \frac{x^3+L(x,2)}{1-x} = \frac{3x-1}{4x(1-x)} - \frac{5x-1}{4x\sqrt{1-6x+x^2}}. \]
Combining the active and inactive chains, we get the formulas for $w_n$ and $W(x)$.

We finish the proof by describing the bijection $\phi$. Let $w\in \W(n)$ be a chain that becomes inactive at level $n$, and let $\sigma_w$ be the permutation corresponding to $w$ obtained from the indices of the variables, with the usual convention that indices of equal elements are underlined and listed in decreasing order. Observe that for $w$ to be inactive, the entry $n$ in $\sigma_w$ must be part of either a $123$ pattern or a $\underline{21}3$ pattern, while the reduced permutation $\sigma_w'$ must avoid both patterns. In particular, this implies that the entry $n-1$ in $\sigma_w$ cannot be to the right of entry $n$.

If $n-1$ and $n$ are not adjacent, we proceed as follows:
\begin{itemize}
\item Map $\sigma_w'$ to the 213-avoiding permutation $\tau_w'$ having the same right-to-left maxima\footnote{A right-to-left maximum of a permutation is an entry with no larger entries to its right.}, keeping the underlines in the same positions.
\item If $i$ is the position of $n$ in $\sigma_w$, we let $\phi(w)$ be the permutation obtained by inserting $n$ into $\tau_w'$ at position $i$.
\end{itemize}
Since $n-1$ must be left of $n$ in $\sigma_w$, the permutation $\phi(w)$ will always contain a 213-pattern. For example, for the chain $x_6<x_8<x_7=x_4<x_2<x_9<x_5=x_1<x_3$, the above steps give the permutations
\begin{gather*}
 \sigma_w = 68\underline{74}29\underline{51}3 \\
 \sigma_w' = 6\col{8}\underline{\col{7}4}2\underline{\col{5}1}\col{3} \mapsto \tau_w' = 6\col{8}\underline{\col{7}1}4\underline{\col{5}2}\col{3} \\
 \phi(w) = 68\underline{71}49\underline{52}3.
\end{gather*}

\clearpage
If $n-1$ and $n$ are adjacent in $\sigma_w$, then it must be of the form $\sigma_0(n-1)n*$, where $\sigma_0$ is a decreasing permutation of size at least 1 with no underlines. In this case, we create the permutation $\tilde\sigma_w = (n-1)\sigma_0n*$ and proceed as in the previous case. For example, for $x_6<x_5<x_8<x_9<x_7=x_4=x_2<x_1<x_3$, we get:
\begin{gather*}
 \sigma_w = 6589\underline{742}13 \mapsto \tilde\sigma_w = 8659\underline{742}13 \\
 \tilde\sigma_w' = \col{8}65\underline{\col{74}2}1\col{3} \mapsto \tau_w' =  \col{8}56\underline{\col{74}1}2\col{3}\\
 \phi(w) = 8569\underline{741}23.
\end{gather*}
Observe that if the original permutation $\sigma_w$ starts with $n-1$, there must be either an ascent or a $\underline{21}$ pattern left of $n$, so $\tilde\sigma_w$ cannot create a duplicate. The above algorithm can be easily reversed, showing the invertibility of $\phi$.
\end{proof}

\section{Other stopping conditions}
\label{sec:mixed}

In this section, we generalize the stopping condition $x_i=x_j$ to chains of arbitrary length. We also consider some conditions with restrictions in both the order and the sizes of the parts in the partitions corresponding to the weak-ordering chains.

\subsection*{The $k$-equal stopping condition}
For the stopping condition $x_{i_1} = \cdots =x_{i_k}$ with $k>1$, active chains in the set $\W(n)$ correspond to ordered partitions of $[n]$ with parts of size at most $k-1$. Thus, $a_n$ (number of active leaves at level $n$) satisfies the recurrence relation
\begin{equation*}
   a_i = f_i \;\text{ for } 1\le i< k, \;\text{ and }\; a_n = \sum_{i=1}^{k-1} \binom{n}{i} a_{n-i} \text{ for } n\ge k,
\end{equation*}
where $f_i$ denotes the $i$-th Fubini number, cf.\ \cite[A276921]{oeis}. To verify this formula, observe that the set of active leaves can be organized by the size of the last block in the partition, so it is the union of $k-1$ disjoint sets. There are $\binom{n}{i}$ possible blocks of size $i$, and once such a block has been chosen as the last block of the partition, there are $a_{n-i}$ possible active partitions for the remaining elements.

We now derive a formula for the number $b_n$ of inactive chains at step $n$. The first stopping condition can only occur after $k$ steps, so $b_1=\cdots=b_{k-1}= 0$.

Inactive nodes of length $j\ge k$ correspond to ordered partitions of $[j]$ having a part of size $k$ that contains $j$, and such that all other parts have size less than $k$. There are $\binom{j-1}{k-1}$ ways to form the block of size $k$ that contains $j$, and for the remaining $j-k$ elements, we choose $i$ to be in parts to the left of that block. Thus,
\begin{equation*}
   \Delta_j = b_j-b_{j-1} = \binom{j-1}{k-1} \sum_{i=0}^{j-k} \binom{j-k}{i} a_i a_{j-k-i},
\end{equation*}
where $(a_n)_{n\in\mathbb{N}}$ is the sequence enumerating the active chains. Therefore, for $n\ge k$,
\begin{equation*}
  b_n = \sum_{j=k}^n \Delta_j = \sum_{j=k}^n \binom{j-1}{k-1} \sum_{i=0}^{j-k} \binom{j-k}{i} a_i a_{j-k-i}.
\end{equation*}

\medskip
Finally, the total number of weak-ordering chains subject to the above stopping condition can be obtained by adding the active and inactive chains. 
\begin{theorem}
If $w_n$ is the number of weak-ordering chains in $\W(n)$, subject to the stopping condition $x_{i_1}=\cdots=x_{i_k}$ with $k>1$, then $w_i = f_i$ for $1\le i < k$, where $f_i$ is the $i$\emph{th} Fubini number, and for $n\ge k$,
\[ w_n = \sum_{i=1}^{k-1} \binom{n}{i} a_{n-i} + \sum_{j=k}^n \binom{j-1}{k-1} 
 \sum_{i=0}^{j-k} \binom{j-k}{i} a_i a_{j-k-i}, \]
where $(a_n)_{n\in\mathbb{N}}$ is the sequence that counts the corresponding active chains.
\end{theorem}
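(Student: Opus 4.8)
The plan is to obtain $w_n$ by simply adding the two quantities that the preceding discussion already determines, namely the number $a_n$ of active leaves and the number $b_n$ of inactive leaves, via the defining identity $w_n = a_n + b_n$. Essentially all the work has been done already, so the proof is an assembly rather than a fresh computation.

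First I would treat the range $1 \le i < k$. A stopping condition $x_{i_1} = \cdots = x_{i_k}$ can be triggered only once $k$ equal variables are present, which is impossible with fewer than $k$ variables in play. Hence no leaf at level $i < k$ can be inactive, so $b_i = 0$ and $w_i = a_i$. Since every block of an ordered partition of $[i]$ has size at most $i \le k-1$, every such partition is active, whence $a_i = f_i$ and therefore $w_i = f_i$; this is exactly the base case of the active-leaf recurrence recorded above.

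For $n \ge k$ I would substitute directly. The active count is $a_n = \sum_{i=1}^{k-1}\binom{n}{i} a_{n-i}$, which is literally the first sum in the statement, and the inactive count is $b_n = \sum_{j=k}^{n}\Delta_j$. Plugging in the expression $\Delta_j = \binom{j-1}{k-1}\sum_{i=0}^{j-k}\binom{j-k}{i} a_i a_{j-k-i}$ produces the second (double) sum verbatim. Adding $a_n$ and $b_n$ then yields the claimed formula, so nothing remains beyond combining the two earlier results.

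The only genuine combinatorial content sits in the $\Delta_j$ formula, and that is the step I would be most careful with. A leaf turns inactive exactly at the level $j$ where inserting the largest index $j$ first completes a block of size $k$; since the configuration was active one step earlier, this forces the block containing $j$ to have size exactly $k$ while every other block has size at most $k-1$. Selecting the $k-1$ companions of $j$ accounts for the factor $\binom{j-1}{k-1}$, and distributing the remaining $j-k$ elements as an active ordered partition to the left of this block and an active ordered partition to its right gives the binomial convolution $\sum_{i}\binom{j-k}{i} a_i a_{j-k-i}$. The point to verify is that this decomposition is both exhaustive and free of overcounting --- equivalently, that each inactive leaf at level $j$ corresponds to a unique triple consisting of the completing $k$-block, the left active part, and the right active part --- which holds precisely because the stopping condition is met for the first time at level $j$.
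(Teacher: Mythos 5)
Your proposal is correct and follows essentially the same route as the paper: the base cases $w_i = f_i$ for $i < k$, the identification $w_n = a_n + b_n$ with $a_n$ given by the last-block recurrence, and the decomposition of each leaf that turns inactive at level $j$ into the unique $k$-block containing $j$ (chosen in $\binom{j-1}{k-1}$ ways) flanked by active ordered partitions of the remaining $j-k$ elements, giving the convolution $\sum_{i}\binom{j-k}{i}a_i a_{j-k-i}$. Your closing remark on exhaustiveness and uniqueness of that decomposition is exactly the justification implicit in the paper's derivation of $\Delta_j$.
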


For example, for $k=3$, we have $w_1=1$, $w_2=3$, and
\begin{equation*}
  w_n = n a_{n-1}+\binom{n}{2} a_{n-2} + \sum_{j=3}^n \binom{j-1}{2} \sum_{i=0}^{j-3} \binom{j-3}{i} a_i a_{j-3-i}
  \;\text{ for } n\ge 3. 
\end{equation*}
This sequence starts with $1, 3, 13, 73, 505, 4165, 39985, 438145,\dots$.

\subsection*{Stopping condition $\boldsymbol{x_{i_1}\!<x_{i_2}\!=x_{i_3}}$ with $\boldsymbol{i_1\!<i_2\!<i_3}$}
We start by discussing the number $a_n$ of active chains. Clearly, $a_1=1$ and $a_2=3$.

For $n\ge 3$, let $w\in\W(n)$ and let $\pi_w = \lbrace B_1, B_2, \dotsc, B_\ell \rbrace$ be its corresponding ordered partition of $[n]$. The chain $w$ is active if for every block $B_j$ with more than one element, its second largest element is smaller than all the elements in every block $B_i$ with $i<j$.

We enumerate these partitions by the size of their last block. If the size of $B_\ell$ is $k\ge 1$, then we must have $B_\ell = \{1, 2 , \dotsc, k-1, i_k\}$, with $k\le i_k \le n$. All other blocks must correspond to an active partition of the set $\{k,\dotsc, n\} \backslash \{i_k\}$.
So, we have the following recurrence relation for $n\ge 1$ (setting $a_0=1$):
\begin{equation*}
  a_n = \sum_{k=1}^n(n-k+1)a_{n-k} = \sum_{k=0}^{n-1}(k+1)a_{k}. 
\end{equation*}
Thus, $a_n = (n+1) a_{n-1}$ which implies $a_n=\frac{(n+1)!}{2}$. 

We now proceed to  derive a formula for $\Delta_n$, the number of inactive leaves at step $n$. The base cases are
$\Delta_1=\Delta_2=0$. Furthermore, we claim that 
\begin{equation}\label{eq:1<2=3rec}
  \Delta_n = n\Delta_{n-1}+(n-2)a_{n-2}, \; \text{ for } n\ge 3.
\end{equation}
First observe that, given an active partition $\pi_a$ of $[n-2]$, the partition obtained by adding the block $\lbrace n-1, n\rbrace$ to the end of $\pi_a$ is inactive. Moreover, $\pi_a$ can be used to create $n-3$ more inactive partitions as follows: Choose $i$ such that $2\le i \le n-2$. Then, for every element $j\ge i$ of $\pi_a$, replace $j$ by $j+1$ and place the block $\lbrace i, n\rbrace$ at the end of the modified partition. This gives $(n-2)a_{n-2}$ inactive partitions of $[n]$ having a last block of size 2.

The remaining inactive partitions must have a last block of size 1 or larger than 2. They can be generated from the $\Delta_{n-1}$ inactive partitions at level $n-1$ by the following process:

\begin{itemize}
\item For every inactive partition $\pi$ of $[n-1]$ and $i\in \lbrace 1, \dotsc, n-1\rbrace$, replace $j$ by $j+1$ for every $j\ge i$ and add the singleton $\{i\}$ to the end of the modified partition. This gives $(n-1)\Delta_{n-1}$ inactive partitions of $[n]$.
\item Alternatively, replacing $j$ by $j+1$ in $\pi$ and adding 1 to the last block, yields $\Delta_{n-1}$ additional inactive partitions of $[n]$.
\end{itemize}

In conclusion, we arrive at the recurrence relation \eqref{eq:1<2=3rec}. Since $a_n=\frac{(n+1)!}{2}$, we can solve \eqref{eq:1<2=3rec} and obtain the explicit formula
\begin{equation*}
  \Delta_n=\frac{n!}{2}\sum_{k=3}^n\frac{k-2}{k}, \; \text{ for } n\ge 3.
\end{equation*}

\subsection*{Stopping condition $\boldsymbol{x_{i_1}\!\le x_{i_2}\!=x_{i_3}}$ with $\boldsymbol{i_1\!<i_2\!<i_3}$}
As in the above case, we start by discussing the active chains. Once again, $a_1=1$ and $a_2=3$.

If $\pi = \lbrace B_1, B_2, \dotsc, B_\ell\rbrace$ is the partition corresponding to an active chain, we must have $\abs{B_\ell} \le 2$. Thus, the block $B_\ell$ is either a singleton $\lbrace i_k\rbrace$ or a block of the form $\lbrace 1, i_k\rbrace$. All other blocks must correspond to an active partition of either $\{1,\dotsc, n\} \backslash \{i_k\}$ or $\{2,\dotsc, n\} \backslash \{i_k\}$. We therefore have the recurrence relation (with $a_0=a_1=1$)
\begin{equation*}
  a_n = na_{n-1}+(n-1)a_{n-2} \;\text{ for } n\ge 2. 
\end{equation*}

For the enumeration of the chains that become inactive at step $n$, the base cases are $\Delta_1=\Delta_2=0$, and $\Delta_3=2$ (namely $x_1<x_2=x_3$ and $x_1=x_2=x_3$). For $n\ge 4$ we have
\begin{equation}\label{eq:1<=2=3rec}
  \Delta_n = (n-1)\Delta_{n-1}+(n-2)\Delta_{n-2} +a_{n-1}-a_{n-2}.
\end{equation}

To show this, we look at four disjoint cases:
\begin{enumerate}[\quad (a)]
\item Every inactive partition $\pi$ of $[n-1]$ yields $n-1$ inactive partitions of $[n]$ whose last block is a singleton:
Given $i\in \lbrace 1, \dotsc, n-1\rbrace$, replace $j$ by $j+1$ for every $j\ge i$ and add $\{i\}$ to the end. This gives $(n-1)\Delta_{n-1}$ inactive partitions of $[n]$.
\item For every inactive partition $\pi$ of $[n-2]$ relabel $j\to j+1$ and add the block $\{1\}$ to the end. Next, choose $i$ in $\{2,\dots,n-1\}$, replace $j$ by $j+1$ for every $j \ge i$, and add $i$ to the last block so it becomes $\{1, i\}$. There are $(n-2)\Delta_{n-2}$ such partitions of $[n]$.
\item For every active partition $\pi_a$ of $[n-2]$, the partition obtained by adding the block $\{n-1,n\}$ to the end of $\pi_a$ is inactive. Moreover, we can create $n-3$ additional inactive partitions of $[n]$ as follows: Choose $i$ such that $2\le i \le n-2$. For every $j\ge i$, replace $j$ by $j+1$ in $\pi_a$ and add the block $\{i, n\}$ to the end of the modified partition. Together, we have $(n-2)a_{n-2}$ inactive partitions whose last block is of size 2 and does not contain the element 1. 
\item Finally, every active partition $\pi_a$ of $[n-3]$ generates $n-2$ inactive partitions of $[n]$ obtained by relabeling $\pi_a$ so that we can add the block $\{1, i, n\}$ with $i\in\{2,\dots,n-1\}$ to the end of it. There are $(n-2)a_{n-3}$ inactive partitions of this type.
\end{enumerate}

The relation \eqref{eq:1<=2=3rec} follows from the fact that $(n-2)(a_{n-2}+a_{n-3}) = a_{n-1}-a_{n-2}$.


\begin{thebibliography}{9}

\bibitem{BBS10}
M.~Barnabei, F.~Bonetti, and M.~Silimbani,
\newblock The descent statistic on 123-avoiding permutations,
\newblock {\em S\'em. Lothar. Combin.} \textbf{63} (2010), \ Art. B63a, 8 pp.

\bibitem{CDZ14}
W.Y.C. Chen, A.Y.L. Dai, and R.D.P. Zhou,
\newblock Ordered partitions avoiding a permutation pattern of length 3,
\newblock {\em European J. Combin.} \textbf{36} (2014), 416--424.

\bibitem{Kitaev11}
S.~Kitaev,
\newblock {\em Patterns in permutations and words},
\newblock Monographs in Theoretical Computer Science. Springer, Heidelberg, 2011.

\bibitem{Kra01}
C.~Krattenthaler,
\newblock Permutations with restricted patterns and dyck paths,
\newblock {\em Adv. in Appl. Math.} \textbf{27} (2001), no.~2-3, 510--530.

\bibitem{oeis}
OEIS~Foundation Inc. (2022),
\newblock The on-line encyclopedia of integer sequences.
\newblock Published electronically at https://oeis.org.
\end{thebibliography}
\end{document}